\thanks{The first author was supported by ERC Advanced Investigators Grant HARG 268105.
The second named author was supported by ISF Grant 1138/10.}
\theoremstyle{plain}
\newtheorem{theorem}{Theorem}[section]
\newtheorem{cor}[theorem]{Corollary}
\newtheorem{prop}[theorem]{Proposition}
\newtheorem{lemma}[theorem]{Lemma}
\newtheorem{definition}[theorem]{Definition}
\theoremstyle{definition}
\newtheorem{ex}[theorem]{Example}
\newtheorem{rmk}[theorem]{Remark}
\numberwithin{equation}{section}
\newtheorem*{theoremA*}{Theorem A}
\newtheorem*{theoremB*}{Theorem B}
\newtheorem*{theoremm1*}{Theorem A'}
\newtheorem*{theoremC*}{Theorem C}
\newtheorem*{theoremD*}{Theorem D}
\newtheorem*{theoremE*}{Theorem E}
\newtheorem*{theoremF*}{Theorem F}
\newtheorem*{theoremE2*}{Theorem E2}
\newtheorem*{theoremE3*}{Theorem E3}
\newcommand{\bs}{\backslash}
\newcommand{\C}{\mathbb{C}}
\newcommand{\Hb}{\mathbb{H}}
\newcommand{\Z}{\mathbb{Z}}
\newcommand{\R}{\mathbb{R}}
\newcommand{\N}{\mathbb{N}}
\newcommand{\Sl}{\operatorname{SL}}
\newcommand{\SO}{\operatorname{SO}}
\newcommand{\GL}{\operatorname{GL}}
\newcommand{\SL}{\operatorname{SL}}
\newcommand{\SP}{\operatorname{Sp}}
\newcommand{\SU}{\operatorname{SU}}
\newcommand{\Lie}{\operatorname{Lie}}
\newcommand{\Ad}{\operatorname{Ad}}
\newcommand{\diag}{\operatorname{diag}}
\newcommand{\re}{\operatorname{Re}}
\def\hat{\widehat}
\def\af{\mathfrak{a}}
\def\gf{\mathfrak{g}}
\def\hf{\mathfrak{h}}
\def\kf{\mathfrak{k}}
\def\lf{\mathfrak{l}}
\def\mf{\mathfrak{m}}
\def\nf{\mathfrak{n}}
\def\pf{\mathfrak{p}}
\def\qf{\mathfrak{q}}
\def\sf{\mathfrak{s}}
\def\sl{\mathfrak{sl}}
\def\symp{\mathfrak{sp}}
\def\zf{\mathfrak{z}}
\def\la{\langle}
\def\ra{\rangle}
\def\1{{\bf1}}
\def\U{\mathcal{U}}
\def\oline{\overline}
\def\Unitary{\operatorname{U}}
\def\Field{\mathbb{F}}
\title[Decay of matrix coefficients]
{Decay of matrix coefficients on reductive homogeneous spaces of spherical type}
\subjclass[2000]{22E46, 22F30, 53C35}
\keywords{Lie group, representation, matrix coefficient, homogeneous space, spherical}
\begin{document}
\date{March 10, 2014}

\begin{abstract}
Let $Z$ be a homogeneous space $Z=G/H$ of a
real reductive Lie group $G$ with a reductive subgroup $H$. The investigation concerns
the quantitative decay of matrix coefficients on  
$Z$ under the assumption that $Z$ is of spherical type, that is, minimal parabolic subgroups 
have open orbits on $Z$.  
\end{abstract}

\author{Bernhard Kr\"{o}tz, Eitan Sayag and Henrik Schlichtkrull}
\maketitle
\section{Introduction}

Representation theory provides a concrete way to construct 
functions on a topological group $G$  via 
matrix coefficients. For a continuous linear representation $(\pi, E)$,
say on a Banach space $E$,
and for a vector $v\in E$ and a continuous 
linear functional $\eta\in E^*$ one defines the matrix coefficient 
$$m_{v,\eta}(g):= \eta(\pi(g^{-1})v) \qquad (g\in G)\, .$$
One of the first results on matrix coefficients
is the Gel'fand-Raikov theorem which asserts that the matrix coefficients of irreducible unitary 
representations separate points on $G$. 
\par If $H$ is a closed subgroup of $G$ and $\eta$ is fixed by $H$,
then the matrix coefficient $m_{v,\eta}$ descends to 
a function on the homogeneous space $Z:=G/H$.
Our interest is to obtain sharp
upper bounds for such matrix coefficients on $Z$, under some 
natural assumptions on $G, H$ and the representation $\pi$.

On the geometric side we assume in addition 
that $Z$ is of {\it spherical type}, that is, the set
$PH$ is open in $G$ for some minimal parabolic subgroup $P$ of $G$.  
Standard examples of spaces of spherical type are symmetric spaces, but there 
are others, for instance triple spaces $Z= G/H$ with $H=\SO(1,n)$ diagonally 
embedded into $G=H\times H \times H$.

\par On the analytic side we assume that $E=V^\infty$ is the smooth
globalization of a Harish-Chandra module $V$
(according to Casselman-Wallach).  
Then $E$ is a smooth representation
whose dual $V^{-\infty}:=E^*$
is rather large and can accommodate non-trivial $H$-fixed vectors $\eta$.

\par The first main result, Theorem  \ref{upper bound}, concerns a bound for 
$m_{v,\eta}$ on the subset $PH\subset G$. To be more explicit, let
$K\subset G$ be a maximal compact subgroup, 
$P=MAN$ the Langlands decomposition of the minimal parabolic group $P$, and
$A^+_{P}:=A^{+} \subset A $ the Weyl chamber associated to $P$. 
Then for every $v\in V$
there is a constant $C>0$ such that 
\begin{equation}\label{T1} |m_{v,\eta}(a)| \leq C a^{\Lambda_V} ( 1 + \|\log a\|)^{d_V} \qquad (a\in \oline{A^+}) \, .\end{equation}
Here $\Lambda_V\in \af^*$ (with $\af={\rm Lie}(A)$) 
and $d_V\in \N$ are determined by $V$.

\par To obtain a bound on the whole of $Z$ a further geometric assumption on $Z$ is needed. 
Specifically, let $P_1, \ldots, P_l\supset A$ be the finitely many minimal parabolic subgroups containing $A$ so that $P_j H$ is open, then clearly the geometric condition 
\begin{equation} \label{T2} G= \bigcup_{j=1}^l  K\oline {A^+_{P_j}} H\end{equation}
allows us to deduce from  (\ref{T1}) a global bound on $Z=G/H$
(see Corollary \ref{corsst}). 
Homogeneous spaces for which there is a choice of $A$ such that 
(\ref{T2}) holds true we call {\it strongly 
spherical}. Symmetric spaces are strongly spherical and 
likewise the aforementioned triple spaces.
It is an open problem whether all spherical spaces are strongly spherical. 

\par Let us mention that strongly spherical spaces satisfy the so-called wave-front lemma 
(see Lemma \ref{wfl}), and are thus 
well-suited for lattice counting problems along the lines of \cite{EM}.

\par Strong sphericity combined with a sufficient supply of finite dimensional
$H$-spherical representations (see condition \ref{enough_fd_spherical})  now allows us to obtain a stronger bound in (\ref{T1}),
in which the constant $C$ is uniform with respect to $v$.
Our second main result, Theorem \ref{strong upper bound},
is thus that under these additional assumptions on $Z$,
there exists for each $\eta\in (V^{-\infty})^H$ a
continuous norm $q$ on $V^\infty$ such that
\begin{equation}\label{T3} |m_{v,\eta}(aH)| \leq  q(v)  
a^{\Lambda_V} ( 1 + \|\log a\|)^{d_V} 
\qquad (a\in \oline{A_j^+}) \, \end{equation}
for all $v\in V^\infty$  and $j=1,\dots,l$.
In particular, the assumptions on $Z$ are satisfied in the case of symmetric
spaces (see Theorem \ref{Sstrong upper bound}).
It is interesting to observe that even for the `group case',
where $Z= G \times G /G\simeq G$ is a reductive group,
the bound improves on known bounds
by allowing $v\in V^\infty$ rather than $v\in V$,
see Remark~\ref{group case rmk}.

\par Let us comment about some historical developments and the nature of 
proof of our main results.  In the group case 
the bound  (\ref{T1}) goes back to Harish-Chandra.
There are two different proofs to obtain (\ref{T1}) for $Z=G$:  
one by Casselman-Milicic in \cite{CM}, which uses that the matrix coefficients satisfy 
a regular singular system of differential equations on $A_\C \simeq (\C^*)^n$,
and an approach by Wallach in \cite{W}, just using  ODE-techniques which is of stunning simplicity and brevity. Using the first method van den Ban obtained 
the bound (\ref{T1}) for symmetric spaces.
Our proof of (\ref{T1}) rests on the observation that 
the condition that $PH$ is open allows 
an adaptation of the proof of Wallach. 

\par Combining the assumption of strong sphericity 
and the assumption (\ref{enough_fd_spherical}) about finite dimensional $H$-spherical representations, 
it is possible to construct $K$-invariant 
weight functions $w$ on $Z$ with $w(aH) \asymp  a^{\Lambda_V} (  1+ \|\log a\|)^{d_V} $ 
on $\oline{A^+}.$ Then one can deduce (\ref{T3}) from (\ref{T1}) using the
globalization theorem of Casselman-Wallach. In particular, for symmetric spaces
we thus obtain with (\ref{T3}) an improvement of van den Ban's bounds on generalized matrix coefficients.

Finally, we mention that in \cite{KSS_a} we have studied 
a more qualitative property of decay
of smooth $L^p$-functions which are not necessarily
matrix coefficients of a Harish-Chandra module. 
More precisely, we showed that 
on a reductive homogeneous space
the smooth vectors in the Banach representation 
$L^{p}(Z)$ all belong to the space of continuous 
functions vanishing at infinity. 
The results of the present paper, when restricted to unitary representations,  provide explicit decay results for generalized matrix coefficients.
Therefore, we expect these results to be useful in extending the results of \cite{DRS} beyond symmetric spaces to the realm of 
strongly spherical spaces.

%

\section{Homogeneous spaces of spherical type}

We will denote Lie groups by upper case $LATIN$ letters, e.g. $A$, $B$ etc., 
and their Lie algebras by lower case  ${\frak{German}}$ letters, e.g. $\af$, $\mathfrak b$ etc.  

\par Let $G$ be a real reductive group in the sense of \cite{W}, Sect.~2.1. Further let 
$H<G$ be a subgroup which is reductive in $G$ (as in \cite{KSS_a}). With these data 
we form the homogeneous space of {\it reductive type} $Z:=G/H$.  We denote by $z_0=H$ the 
standard base point of $Z$.

\par We fix a maximal compact subgroup $K<G$  such that $H\cap K$ is a maximal 
compact subgroup of $H$ and such that the associated 
Cartan involution $\theta$ of $G$ preserves $H$.
We will frequently use the Cartan decomposition $\gf=\kf +\sf$ of the Lie algebra $\gf=Lie(G).$ Here $\sf$ is the 
complement of $\kf=Lie(K)$ with respect to a non-degenerate invariant bilinear form 
on $\gf$, say $\kappa(\cdot, \cdot)$. 

\par The form $\kappa$ induces an orthogonal decomposition $\gf=\hf + \qf$ and 
the space $Z$ is topologically a vector bundle over $K/H\cap K.$ Indeed, by
the Mostow decomposition 
\begin{equation}\label{polar}
K \times _{K\cap H} (\sf \cap \qf) \to Z, \ \ [k,X]\mapsto k\exp(X)\cdot z_0\end{equation}
is a diffeomorphism. 
This decomposition is valid
in the generality of reductive homogeneous spaces. 
A smaller class of homogeneous spaces with a richer geometry is introduced below.

\subsection{Spaces of spherical type}

Recall (see \cite{Brion}) that a complex homogeneous space
$G_\C/H_\C$ is said to be spherical if there exists a Borel
subgroup $B_\C$ such that $B_\C H_\C$ is open in $G_\C$. The
following definition is analogous.
Let $Z=G/H$ be a reductive homogeneous space.

\begin{definition} \label{ST}
The space $Z$ is of {\rm spherical type}
if there exists a minimal parabolic subgroup $P$ such that
$PH$ is open in $G$. If in addition $\dim (P\cap H)=0$
then we say that $Z$ is of {\rm pure} spherical type.
\end{definition}

\begin{rmk}  For two closed subgroups $A,B$ of a Lie group $G$, the set
$AB$ is open in $G$ if and only if
$\af + \mathfrak b =\gf$.
Hence the condition that $PH$ is open allows the obvious 
infinitesimal characterization 
$\gf= \hf + \pf$.
\end{rmk}

Note that the main intention behind the
concept in \cite{Brion} is the classification of Gel'fand pairs. With
that intention one should add to Definition \ref{ST}
the condition that $(M,M\cap H)$ is a Gel'fand pair. Here $M=Z_{K}(\af)$ is the centralizer of $\af$ in $K.$ 
However this is not our purpose. The non-symmetric
space $\SP(n,1)/\SP(n)$, for example, is of spherical type
but fails the Gel'fand pair condition.


\begin{definition} \label{SP}
Let $P\subset G$ be a parabolic subgroup.
The pair $(P,H)$ is called {\it spherical} if $PH$ is open in $G$ and for some Langlands decomposition $P=M_PA_PN_P$ 
we have $\mf_P\cap\sf\subset\hf.$
For future reference we write these conditions:
\begin{enumerate}
\item $\mf_P\cap\sf\subset\hf$,
\item $PH$ is open in $G$.
\end{enumerate}
\end{definition}

Equivalently, for some Levi decomposition $P=LN_P$, the largest non-compact semisimple ideal of $L=Lie(L)$ belongs to $\hf$.

\begin{lemma} \label{spherical always minimal}
Let $(P,H)$ be a spherical pair, and let $P_0\subset P$ be minimal
parabolic. Then $P_0H$ is open. 
In particular, $Z$ is spherical.
\end{lemma}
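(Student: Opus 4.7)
\medskip

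The plan is to verify the infinitesimal criterion noted in the Remark after Definition \ref{ST}: $P_0H$ is open in $G$ if and only if $\pf_0+\hf=\gf$. Using condition (2) of Definition \ref{SP} we already have $\pf+\hf=\gf$, so it suffices to prove
\[ \pf \subset \pf_0+\hf. \]
This is where condition (1), $\mf_P\cap\sf\subset\hf$, must enter.

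I would first write $P_0$ explicitly. Any minimal parabolic $P_0\subset P$ is obtained by picking a minimal parabolic $P_0'=M_{M_P}A_{M_P}N_{M_P}$ of the reductive Levi $M_P$ (inside a $\theta$-stable Iwasawa setup compatible with the given Langlands decomposition $P=M_PA_PN_P$), and setting $P_0=P_0'A_PN_P$, so that at the Lie algebra level
\[ \pf_0 \;=\; \mf_{M_P}+\af_{M_P}+\nf_{M_P}+\af_P+\nf_P. \]
Since $\pf=\mf_P+\af_P+\nf_P$ and $\mf_P=\kf_{M_P}+\sf_{M_P}$ by the Cartan decomposition of $M_P$, it is enough to show $\kf_{M_P}+\sf_{M_P}\subset \mf_{M_P}+\nf_{M_P}+\hf$.

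For the $\sf_{M_P}$ part, condition (1) gives the inclusion $\sf_{M_P}=\mf_P\cap\sf\subset\hf$ for free. For the $\kf_{M_P}$ part, I would use the standard description
\[ \kf_{M_P}\;=\;\mf_{M_P}\oplus\{X+\theta X:X\in\nf_{M_P}\}, \]
combined with the identity $X+\theta X=2X-(X-\theta X)$. Since $X\in\nf_{M_P}$ and $X-\theta X\in\sf_{M_P}\subset\hf$, each element $X+\theta X$ lies in $\nf_{M_P}+\hf$. Thus $\kf_{M_P}\subset\mf_{M_P}+\nf_{M_P}+\hf$, and altogether $\mf_P\subset\mf_{M_P}+\nf_{M_P}+\hf$. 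Adding $\af_P+\nf_P$ gives $\pf\subset\pf_0+\hf$, and combined with $\pf+\hf=\gf$ we conclude $\pf_0+\hf=\gf$, i.e.\ $P_0H$ is open. The last sentence of the lemma is then immediate from Definition~\ref{ST}.

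The main (mild) obstacle is just the bookkeeping: one has to make sure the Iwasawa data inside $M_P$ is chosen compatibly with $\theta$ so that the Cartan decomposition $\mf_P=\kf_{M_P}+\sf_{M_P}$ interacts correctly with the root-space description of $\kf_{M_P}$. Once that compatibility is fixed, the trick $X+\theta X=2X-(X-\theta X)$ is the whole content of the argument; condition (1) does exactly the work needed to move the $\theta$-symmetric part into $\hf$.
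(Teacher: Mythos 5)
Your argument is correct, though it reaches the conclusion by a somewhat different route than the paper's. The paper works structurally: it writes $\mf_P$ as the direct sum of its maximal compact ideal and a complementary noncompact ideal, observes that condition (1) forces the noncompact ideal into $\hf$ (a noncompact reductive algebra is generated by its intersection with $\sf$, and $\hf$ is a subalgebra), and that the compact ideal, being inside $\kf$ and commuting with the rest of $\mf_P$, centralizes $\af_0$ and hence lies in $\mf_0$. This yields $M_P\subset M_0H$ at the group level and therefore the set identity $PH=P_0H$, which is open by condition (2). You instead stay entirely at the Lie algebra level, unwinding an Iwasawa decomposition inside $M_P$ and using the identity $X+\theta X=2X-(X-\theta X)$ to push the $\theta$-symmetric part of $\kf_{M_P}$ into $\nf_{M_P}+\hf$. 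Both proofs are valid and both use condition (1) for the same purpose, namely to absorb $\sf\cap\mf_P$ into $\hf$; the paper's version sidesteps the Iwasawa bookkeeping you flag at the end and delivers the slightly stronger statement $PH=P_0H$ rather than just openness of $P_0H$. As for the compatibility of Iwasawa data that you worry about: it is automatic, because $\theta$ restricts to a Cartan involution of $M_P$, so every minimal parabolic of $M_P$ already carries a canonical $\theta$-adapted Langlands decomposition, and there is no genuine choice to reconcile.
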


\begin{proof} 
Write $\mf_{P}$ as a direct sum of a compact ideal and non-compact ideal. 
It follows from
condition (1) that  
all non-compact ideals of $\mf_P$ belong to $\hf$.
Now if $P_0=M_0A_0N_0\subset P$ is a minimal parabolic,
then the compact ideals of $\mf_P$ centralize $\af_0$,
hence lie in $\mf_0$. It follows that $M_P\subset M_0H$ and
hence $P_{0}H=PH$. The result follows from condition (2). 
\end{proof}

\subsection{Examples}

\subsubsection{Symmetric spaces} \label{ex:spherical}
In a symmetric space $Z$, all minimal $\sigma\theta$-stable parabolic
subgroups $P$ satisfy (1) and (2), see \cite{BanI}.
Hence by Lemma \ref{spherical always minimal}, $Z$ is of
spherical type and $P_0H$ is open for  
every minimal parabolic $P_0\subset P$.
For more details see remark \ref{rmk-spherical}.

\subsubsection{Gross-Prasad spaces}
\label{STGP}

We let $G_0$ be a reductive group and $H_0<G_0$ a reductive subgroup.
Set $G=G_0 \times H_0$ and $H=\diag (H_0)$. Note that
$Z=G/H\simeq G_0$, viewed as a left$\times$right homogeneous space for $G_0\times H_0$.
\par We consider the following choices for $G_0$ and $H_0$,
with which we refer to $Z$ as a Gross-Prasad space (cf.~\cite{G-P}):

\begin{itemize}
\item $G_0=\GL(n+1,\Field)$ and $H_0=\GL(n,\Field)$
for $n\geq 0$.
\item $G_0=\Unitary(p,q+1,\Field)$ and $H_0=\Unitary(p,q,\Field)$ for
 $p+q\geq2$.
\end{itemize}
Here $\Field=\R$ or $\C$. 

For a parabolic subgroup $P=P_1 \times P_2$ of $G$ the condition that $PH$ is open 
is equivalent to $P_1 P_2$ is open in $G_0$, or $\pf_1 +\pf_2 =\gf_0$. 
The simple verification that this is possible for the above spaces is omitted. 
The spaces in the first item
are pure, but in the second item not in general.

\subsubsection{Triple spaces}\label{Triple}
Let $G_0$ be a reductive group and let $G=G_0^3:= G_0 \times G_0\times G_0$ and 
$H=\diag (G_0)$.  The corresponding reductive homogeneous space 
$Z=G/H$ will be referred to as a triple space.  In general this space is not 
spherical as an easy dimension count will show. For $G_0$ locally $\SO(1,n)$ however, 
one obtains a spherical space. Dimension count shows that it is pure
if and only if $n=2$ or $3$.

\subsubsection{Complex spherical spaces}\label{Brion-ex2}
Let $G_\C/H_\C$ be a complex spherical space with open Borel orbit
$B_\C H_\C$. When we regard the complex groups as real Lie groups,
$G_\C/H_\C$ is of spherical type and $(B_\C,H_\C)$ is a spherical pair.
The complex spherical spaces have been classified
(see the lists in \cite{Kraemer} and \cite{Brion}).
For example, the triple space of $G_0=\Sl(2,\C)$
is a complex spherical space (\cite{Kraemer} p.~152).

\subsubsection{Real forms of spherical spaces}\label{Brion-ex}
Let $G_\C$, $H_\C$, $B_\C$ be as above, and assume that $G$
is a quasisplit real form of $G_\C$. Then $B_\C$ is the complexification
of a minimal parabolic $P$ in $G$, for which $PH$ is open. Hence
$G/H$ is  of spherical type. The triple space
with $G_0=\Sl(2,\R)$ is obtained in this fashion.

\subsubsection{}\label{quasisplit forms}
Let $G_\C/H_\C=\Sl(2n+1,\C)/\SP(n,\C)$ or $\SO(2n+1,\C)/\GL(n,\C)$.
According to \cite{Kraemer} p.~143, these are
complex spherical spaces as in \ref{Brion-ex2}.
Dimension count shows they are pure.
The corresponding split or quasisplit real forms in 
\ref{Brion-ex} are
\begin{eqnarray*}
&&\Sl(2n+1,\R)/\SP(n,\R)\\
&&\SU(n,n+1)/\SP(k,k), \quad n=2k\\
&&\SO(n,n+1)/\Unitary(k,k), \quad n=2k\\
&&\SU(n,n+1)/\SP(k,k+1), \quad n=2k+1\\
&&\SO(n,n+1)/\Unitary(k,k+1), \quad n=2k+1.
\end{eqnarray*}

\medskip
Notice that for $n>3$ the
triple spaces with $G_0=\SO_e(n,1)$ do not
correspond to any spaces in \ref{Brion-ex2} or \ref{Brion-ex}.

\section{Bounds for generalized  matrix coefficients}
In this section  we prove a fundamental
bound for generalized matrix coefficients for spaces of spherical type.

\par To begin with we need to recall a few notions from basic
representation theory. Let $(\pi, E)$ be a Banach representation of
$G$, and let $E^\infty$ denote its space of smooth vectors. As usual
we topologize $E^{\infty}$ by the family of Sobolev norms $\|.\|_k$ for 
$k=0,1,2,\dots$, given by
$$\|v\|_k= \sum_{m_1+\dots+m_n\le k} \|\pi(X_1^{m_1}\cdots X_n^{m_k})v\|$$
with respect to a fixed basis $X_1,\dots,X_n$ for $\gf$.
Then $$E^\infty=\cap_{k\in\N} E_k$$ is an intersection 
Banach spaces, where $E_k$ is the
completion of $E^\infty$ with respect to $\|.\|_k$. 
Likewise the space $E^{-\infty}$ of distribution vectors (i.e.
continuous linear forms on $E^\infty$)
is the union 
$$E^{-\infty}=\cup_{k\in\N} E_{-k},$$ where
$(E_{-k},\|.\|_{-k})$ is the Banach space dual to $(E_k,\|.\|_k)$.
For each $k\in\N$ we thus have
\begin{equation}\label{Sobolev 1}
|\eta(v)|\le \|\eta\|_{-k}\|v\|_k,\quad (\eta\in E^{-\infty}, v\in E^\infty),
\end{equation}
with $\|\eta\|_ {-k}<\infty$ if and only if $\eta\in E_{-k}$.

By continuity of the $G$-action, we conclude
that for each $k\in\N$ there exists $C_k>0$ and $r>0$
such that
\begin{equation}\label{Sobolev}
|\eta (\pi(g)v)|\leq C_k\|\eta\|_{-k} \|v\|_k \|g\|^r ,\quad
(g\in G),
\end{equation}
for all $v\in E^\infty$, $\eta\in E^{-\infty}$. 
Here $\|\cdot\|$ is a norm on $G$ in the sense of
\cite{W} Section~2.A.2, from which Lemma 2.A.2.2 is used.
We recall also that if $\af\subset\sf$ is
a maximal abelian subspace and $\af^+\subset\af$ a Weyl chamber,
then by \cite{W}, Lemma 2.A.2.3, there exist 
$\delta\in\af^*$ and $C>0$ 
such that
\begin{equation}\label{G-norm}
 \|a\| \leq C a^{\delta}
\end{equation}
for all $a\in \oline{A^+}$, the closure of $A^+=\exp(\af^+)$.

\par If $V$ is a Harish-Chandra module for $(\gf, K)$, then we call 
a Banach representation $(\pi, E)$ of $G$ a {\it Banach globalization}
of $V$ provided that the $K$-finite vectors of $E$ are isomorphic
to $V$ as $(\gf, K)$-modules.  The Casselman-Wallach theorem asserts
that $E^\infty$ does not depend on the particular globalization $(\pi, E)$
of $V$ and thus we may define 
$$V^\infty:=E^\infty \text{ and } V^{-\infty}:=E^{-\infty}.$$

\par  Let $Z=G/H$ be a reductive homogeneous space
and $V$ a Harish-Chandra module.
For $v\in V^\infty$ and $\eta\in (V^{-\infty})^H$ an $H$-fixed distribution
vector we denote by
\begin{equation}\label{matrix coeff}
m_{v,\eta}(gH):=\eta(\pi(g)^{-1}v),\quad (g\in G),
\end{equation}
the corresponding
generalized matrix coefficient.
It is a smooth function on $Z$.

Let $P \subset G$ be a minimal parabolic subgroup of $G.$ 
In this situation, there exists a maximal abelian subalgebra $\af \subset \sf$ which is contained in $\pf=\Lie(P).$

We also use the following notations:
\begin{itemize}
\item $\Pi\subset \af^*$ the set of simple roots.
\item  $\pf$ the Lie algebra of $P$ 
\item  $\Sigma^+\subset \af^*$ for the positive system attached to $P.$
\item $P=MAN$ is a Langlands decomposition of $P.$
\item $A^+_{P}:=A^{+} \subset A$ the Weyl chamber associated to $P.$
\item $\bar P=\theta(P)$,  an opposite parabolic subgroup. 
\end{itemize}

We will use  Iwasawa decomposition in the form $G=KAN.$


\par More generally, for a subset $F\subset \Pi$ one defines a standard parabolic 
subalgebra $\pf_F\supset \pf$. Let $\af_F:=\{ X\in \af \mid (\forall \alpha \in F)\alpha(X)=0\}$  
and $\mf_{F}:= \zf_\gf(\af_F)$ be the centralizer of $\af_F$ in $\gf$.  Further  let 
$\Sigma^+\bs \la F\ra$ be the 
set of positive roots which do not vanish on $\af_F$ and let  
$\nf_F:= \bigoplus_{\alpha \in \Sigma^+\bs \la F\ra} \gf^\alpha$
be the corresponding subalgebra of $\nf$. Then $\pf_F:= \mf_{F} \ltimes \nf_F$ defines a 
parabolic subalgebra of $\gf$ containing 
$\pf$.  In particular $\pf_\emptyset=\pf$. 

\par Given a Harish-Chandra module $V$, the quotient $V/\nf_F V $ is non-zero and a Harish-Chandra 
module for the pair $(\mf_F, K_F)$ with $K_F=Z_K(\af_F)$, see \cite{W}, Lemma 4.3.1.
As Harish-Chandra modules are finite under the center of the enveloping algebra, we obtain for every 
$F\subset \Pi$  a finite subset $E(F, V)\subset \af_F^*$ and an integer $d_F\in \N_0$ such that 
$$V/\nf_F V =\bigoplus_{\lambda \in E(F, V)} (V/\nf_F)_\lambda$$ 
with the generalized eigenspaces: 
$$(V/\nf_F V)_\lambda=\{ v \in V/\nf_F V\mid  \forall H\in \af_F: 
( H -\lambda(H))^{d_F} v =0\}\, .$$
We set $E(V):= E(\emptyset, V)$ and record (\cite{W} 4.3.4(2))
$$ E(V)|_{\af_F}= E(F, V)\, . $$

Now label $\Pi=\{\alpha_1, \ldots, \alpha_r\}$ and define $\{ H_1, \ldots, H_r\} $ the corresponding 
dual basis in $\af$. 
We follow \cite{W}, 4.3.5, and define $\Lambda_V\in \af^*$ 
by 
\begin{equation}\label{defi Lambda}
\Lambda_V(H_j):= \max \{-\re \lambda(H_j)\mid \lambda\in E(V)\}\, .
\end{equation}
Furthermore, we let $F_j:= \Pi\bs\{\alpha_j\}$ for $j=1,\dots r$ and put
$$d_V:= \sum_{j=1}^r d_{F_j}.$$

\begin{rmk} The definition of $\Lambda_V$ can be motivated as follows.
By \cite{CM} Thm.~8.22 the leading (with respect to ordering by positive roots)
exponents for $V$ belong to $-E(V)$, and hence
by \cite{CM} Thm.~8.11 every $K$-finite matrix coefficient of $V$ is
bounded on $A^+$ by a constant multiple of
$a^{\Lambda_V}(1+\|\log a\|)^d$ for some $d\in \N$.
Furthermore, \cite{milicic} Thm.~2.1
ensures that $\Lambda_V$ is sharp for the $K$-finite matrix
coefficients. However, these results from \cite{CM} and  \cite{milicic}
are not used in what follows.
\end{rmk}

\def\anumber{s}
\begin{theorem}\label{upper bound}
Let $G$ be a real reductive group and let $H \subset G$ be reductive in $G.$ Suppose that $PH$ is open in $G$ for a minimal parabolic subgroup $P \subset G.$ 
Let $V$ be a Harish-Chandra module.
Then for each $v\in V$ and each $\anumber\in\N$
there exists a constant $C>0$ such that
\begin{equation} \label{fundbound} |m_{v,\eta}(ka)| \le C \|\eta\|_{-\anumber} \,a^{\Lambda_V}
(1+\|\log a\|)^{d_V} \end{equation}
for all $k\in K$, $a \in \oline{A^+_{P}}$ and
$\eta\in (V^{-\infty})^H\cap E_ {-\anumber}$. 
\end{theorem}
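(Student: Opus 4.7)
The plan is to adapt Wallach's ODE method (\cite{W}, Sect.~4.3) for the group case, substituting the spherical openness $\mathfrak{g}=\mathfrak{h}+\mathfrak{p}$ and the $H$-invariance of $\eta$ for the triangular decomposition used by Wallach. Since $V$ is Harish-Chandra, $\pi(K)v$ spans a finite-dimensional subspace of $V$, so by compactness of $K$ the bound at $k=e$ yields the bound on $K\times\overline{A^+}$ with an enlarged constant $C$. We thus focus on $f(a):=m_{v,\eta}(a)=\eta(\pi(a^{-1})v)$ for $a\in\overline{A^+}$.

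For $X\in\mathfrak{a}$ we have $Xf(a)=-m_{Xv,\eta}(a)$, and iteratively every $m_{Pv,\eta}(a)$ ($P\in U(\mathfrak{a})$) is a signed differentiation of $f$. Since $V/\mathfrak{n}V$ is a finite-dimensional $\mathfrak{a}$-module with generalized eigenvalues in $E(V)$, a Cayley--Hamilton type polynomial $P_0\in U(\mathfrak{a})$, built as a product $\prod_{j,\mu}(H_j-\mu(H_j))^{d_{F_j}}$ over $\mu\in E(F_j,V)$, satisfies $P_0v\in\mathfrak{n}V$. Writing $P_0v=\sum_k Y_ku_k$ with $Y_k\in\mathfrak{g}^{\alpha_k}\subset\mathfrak{n}$ and $u_k\in V$, the adjoint relation $\pi(a^{-1})Y_k=a^{-\alpha_k}Y_k\pi(a^{-1})$ gives
\[
P_0(\partial)f(a)=\pm\sum_k a^{-\alpha_k}\eta\bigl(Y_k\pi(a^{-1})u_k\bigr),
\]
an inhomogeneous differential equation whose homogeneous solutions are dominated by $a^{\Lambda_V}(1+\|\log a\|)^{d_V}$ and whose right-hand side carries exponentially decaying prefactors $a^{-\alpha_k}$ on $\overline{A^+}$.

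The role of sphericity and $H$-invariance is to bound each term $\eta(Y_k\pi(a^{-1})u_k)$ linearly in $\|\eta\|_{-\anumber}$ without destroying the decay. Fixing a vector-space decomposition $\mathfrak{g}=\mathfrak{h}\oplus\mathfrak{q}$ with $\mathfrak{q}\subset\mathfrak{p}$, the PBW theorem combined with the vanishing $\eta(\mathfrak{h}\cdot V^\infty)=0$ (the infinitesimal form of $H$-invariance) lets one replace any $U(\mathfrak{g})$-monomial applied to $v$, when evaluated under $\eta$, by its ordered $\mathfrak{q}$-monomial part. Iterating this rewriting together with the adjoint relation for $\mathfrak{q}\subset\mathfrak{p}=\mathfrak{m}+\mathfrak{a}+\mathfrak{n}$, each residual term is recast as a combination of matrix coefficients of further $U(\mathfrak{g})$-translates of $v$, each carrying additional $a^{-\alpha}$ factors from positive roots. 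A multi-variable version of Wallach's ODE lemma (\cite{W}, Lemma~4.A.1.2), with initial values at $a=e$ controlled by the Sobolev estimate $|f(e)|\le C\|v\|_{\anumber}\|\eta\|_{-\anumber}$ via \eqref{Sobolev 1}, then yields the stated bound. The main technical obstacle is to verify that this iterative $\mathfrak{q}$-rewriting closes on a finite family of matrix coefficients (so the system remains finite) and that the accumulated prefactors keep the right-hand side genuinely smaller than the leading exponent $\Lambda_V$ uniformly in $\eta$; secondarily, one has to extend Wallach's scalar ODE lemma to commuting systems when $\rank\mathfrak{a}>1$.
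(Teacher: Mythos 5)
The proposal is on the right track conceptually — it correctly identifies that one should adapt Wallach's ODE method, reduce to $a\in\oline{A^+}$ by $K$-compactness, exploit the finiteness of $V/\nf V$, and use $H$-invariance of $\eta$ together with the sphericity $\gf=\hf+\pf$ to control the residual terms. However, there is a critical directional error in the decomposition, and this is precisely where the sphericity is actually used.

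You fix a complement $\qf$ to $\hf$ with $\qf\subset\pf=\mf+\af+\nf$. To bound a term $\eta(Y\pi(a^{-1})u)$ for $Y\in\qf$ (after the $\hf$-part has been killed by $\eta$), the only available move is to conjugate $Y$ through $\pi(a^{-1})$:
\[
\eta\bigl(Y\,\pi(a^{-1})u\bigr)=\eta\bigl(\pi(a^{-1})\,(\Ad(a)Y)\,u\bigr).
\]
For this to close the system with bounded coefficients one needs $\Ad(a)Y$ to remain bounded as $a\to\infty$ in $\oline{A^+}$. But for $Y$ in the $\nf$-part of $\qf\subset\pf$, $\Ad(a)$ is \emph{expansive}: it produces factors $a^{+\beta}$ with $\beta\in\Sigma^+$, which grow and exactly cancel the decaying prefactors $a^{-\alpha_k}$ you obtained from $P_0 v\in\nf V$. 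Your claim that the rewriting produces ``additional $a^{-\alpha}$ factors'' is incorrect with this choice of $\qf$. The fix, which the paper invokes at the outset of its proof, is to observe that since $\theta$ preserves $H$, the openness of $PH$ is equivalent to that of $\bar P H$, i.e.\ $\gf=\hf+\bar\pf$. One must then take the complement $\qf$ inside $\bar\pf=\mf+\af+\oline\nf$, on which $\Ad(a)$ is \emph{contractive} (bounded) for $a\in\oline{A^+}$. With this correction the key rewriting is a one-shot operation — no iterative $\qf$-rewriting closure problem arises, so your flagged ``main technical obstacle'' disappears; the only iteration needed is the improvement of the a priori exponent $\delta$, which the paper carries out one simple root $\alpha_j$ at a time with a first-order ODE system built from $V/\nf_{F_j}V$ (rather than a single Cayley--Hamilton operator and a multi-variable ODE lemma, which your sketch would still require you to develop).
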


\begin{rmk} Note that (\ref{fundbound}) is uniform with respect to
$\eta$ but not with respect to $v$. Under some additional hypotheses
we give in Section~\ref{stronglyspherical} an 
improvement providing uniformity also with 
respect to $v$. 
\end{rmk}

\begin{rmk} The proof will be an adaptation of the proof of
Thm.~4.3.5 in \cite{W} to the present situation   
of generalized matrix coefficients. Note that we have
$m_{v,\eta}(a)=\eta(\pi(a)^{-1}v)$ with a $K$-finite vector $v$,
whereas \cite{W} considers $\mu(\pi(a)v)$ with $\mu$ being $K$-finite.
The main difference is then that \cite{W} has $v\in V^\infty$ arbitrary,
whereas we have $\eta\in V^{-\infty}$ but $H$-fixed.
\end{rmk}

\begin{proof}
Since $H$ is invariant under the Cartan involution,
the assumed openness of $PH$
is equivalent with the same property for $\bar PH$. 
We shall use the property in this form.
The number $\anumber\in\N$ will be fixed
throughout the proof.

We first provide the central step where the proof differs from 
\cite{W}.
Our starting point is the following estimate, which
follows from (\ref{Sobolev}) and (\ref{G-norm}).
Let $(\pi, E)$ be a Banach globalization of $V$.
Then by (\ref{Sobolev}) 
there exists $\delta\in \af^*$ and $C>0$ such that
\begin{equation}\label{1000}
 |m_{v, \eta} (a)|\leq  C \|\eta\|_{-\anumber}\|v\|_{\anumber}\, a^\delta, \qquad
(a\in\oline{A^+})
\end{equation}
for $v\in V$ and $\eta\in (V^{-\infty})^H$.
If $\delta$ happens to be $\leq \Lambda_V$ on $\af^+$ we are
done. Otherwise we need to improve the estimate. The proof will
be by downwards induction along $\af^+$.

The key ingredient is as follows. Suppose that $v\in V$
is of the form
\begin{equation}\label{form of v}
v=d\pi(X) u
\end{equation} 
for some normalized positive root vector
$X\in \gf^\alpha\subset \nf$ and some $u\in V$
(this corresponds to the assumption $\mu\in \nf_F V^\sim$ in \cite{W}
p.116, where $ V^\sim$ is the contragredient of $V$).
As $\bar PH$ is open in
$G$ we can write $X=X_1 + X_2$ with $X_1\in \hf$ and
$X_2 \in \af +\mf +\oline{\nf}$. Now observe that
\begin{eqnarray*} m_{v, \eta} (a) &=& \eta (\pi(a)^{-1} d\pi(X) u) = a^{-\alpha}
\eta(d\pi(X) \pi(a)^{-1} u)\\
&=& a^{-\alpha}\eta(d\pi(X_2) \pi(a)^{-1}u)=a^{-\alpha}\eta(\pi(a)^{-1}
d\pi(\Ad(a) X_2)u)\, .
\end{eqnarray*}
As $\Ad(a)$ is contractive on $\af+\mf+\oline\nf$ we
can write $d\pi(\Ad(a) X_2)u$ as a linear combination
of elements from $V$ with $a$-dependent coefficients,
which are bounded. For vectors of the form
(\ref{form of v}) we thus
obtain with
(\ref{1000}) an improved bound
\begin{equation*}
|m_{v, \eta} (a)|\leq  C' \|\eta\|_{-\anumber}\, a^{\delta-\alpha}  \qquad ( a \in
\oline{A^+})
\end{equation*}
with a constant $C'$ depending on $u$.

\par  Having established the key step, the proof will follow as in \cite{W}, p.~117-118. 
For the sake of completeness we provide the adaptation to the 
present situation. The argument is based on the following simple lemma.

\begin{lemma}\label{diff eq lemma} 
Let $A$ be a complex $n\times n$-matrix. 
There exists $C>0$ such that the following holds.
Let $f:\R\to\C^n$ be differentiable and define $g:\R\to\C^n$ by
\begin{equation}\label{diffeq}
\frac{df}{dt}-Af=g
\end{equation}
Assume
$$\|f(0)\|\le 1,
\quad
\|g(t)\|\leq e^{\nu t}
\quad (t\ge 0),$$ 
for some $\nu\in\R$. Let 
$$\mu=\max\{\re \lambda\mid \lambda \text{ an eigenvalue of } A\}$$
and let $p\in\N$ be the maximal algebraic multiplicity of the eigenvalues with
$\re\lambda=\mu$. Then
$$\|f(t)\|\le C(1+t)^p e^{\max\{\mu,\nu\}t}$$
for all $t\ge 0$.
\end{lemma}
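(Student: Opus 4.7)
The plan is to solve the inhomogeneous linear ODE \eqref{diffeq} by variation of parameters and then apply the standard norm estimate on the matrix exponential $e^{tA}$ that comes out of Jordan normal form.

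First I would write the solution of \eqref{diffeq} explicitly as
$$ f(t) = e^{tA}f(0) + \int_0^t e^{(t-s)A} g(s)\, ds \qquad (t\ge 0). $$
The main input is the estimate
$$ \|e^{tA}\| \le C_1 (1+t)^{p-1} e^{\mu t} \qquad (t\ge 0), $$
which follows from the Jordan decomposition of $A$: writing $A=S J S^{-1}$ with $J$ a direct sum of Jordan blocks of sizes at most $p$ whose eigenvalues have real part at most $\mu$, each block's exponential has entries of the form (polynomial of degree less than $p$) times $e^{\lambda t}$ with $\re\lambda\le\mu$. The constant $C_1$ depends only on $A$.

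Using this, the first term is bounded by $C_1(1+t)^{p-1}e^{\mu t}\|f(0)\|\le C_1(1+t)^{p-1}e^{\mu t}$. For the integral term I would split into two cases.

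Case $\nu\ge\mu$: here $e^{\mu(t-s)}\le e^{\nu(t-s)}$ for $s\le t$, so
$$ \Big\|\int_0^t e^{(t-s)A}g(s)\,ds\Big\|\le C_1\int_0^t (1+t-s)^{p-1} e^{\nu(t-s)}e^{\nu s}\,ds \le C_1 e^{\nu t}\,\frac{(1+t)^p}{p}. $$
Case $\nu<\mu$: substituting $u=t-s$ one gets
$$ C_1 e^{\mu t}\int_0^t (1+u)^{p-1} e^{-(\mu-\nu)u} e^{-(\nu-\mu)t}\,du \le C_2 (1+t)^{p-1}e^{\mu t}, $$
where $C_2$ depends only on $A$ (the integral $\int_0^\infty (1+u)^{p-1}e^{-(\mu-\nu)u}\,du$ need not be used; instead one directly notes that $\int_0^t (1+u)^{p-1}e^{(\mu-\nu)u}\,du \le C_2 (1+t)^{p-1}e^{(\mu-\nu)t}$ since $\mu-\nu>0$, and then the factor $e^{(\nu-\mu)t}$ cancels). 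Combining the two cases yields $\|f(t)\|\le C(1+t)^p e^{\max\{\mu,\nu\}t}$ with $C$ depending only on $A$.

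The only mild subtlety, and the one point where one has to resist sloppiness, is to keep the constant $C$ uniform in $\nu$: a naive use of $\int_0^t e^{(\nu-\mu)s}\,ds = (e^{(\nu-\mu)t}-1)/(\nu-\mu)$ introduces a $1/(\nu-\mu)$ that blows up as $\nu\to\mu^+$. The two-case split above, in which one absorbs $e^{\mu(t-s)}$ into $e^{\nu(t-s)}$ whenever $\nu\ge\mu$, circumvents this and is what produces the extra factor $(1+t)$ (hence the exponent $p$ rather than $p-1$) in the final bound.
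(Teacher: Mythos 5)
Your overall route---variation of parameters plus the Jordan-form estimate $\|e^{tA}\|\le C_1(1+t)^{p-1}e^{\mu t}$---is exactly what the paper intends: its proof consists of the single remark that the lemma follows from the solution formula for (\ref{diffeq}) and the Jordan form of $A$. However, your treatment of the case $\nu<\mu$ contains a false claim, and it is precisely the uniformity-in-$\nu$ pitfall you yourself flagged for the other side. After the substitution $u=t-s$ the integral term equals $C_1 e^{\nu t}\int_0^t(1+u)^{p-1}e^{(\mu-\nu)u}\,du$ (your displayed exponents have sign slips, but this is clearly what is meant). You then assert $\int_0^t(1+u)^{p-1}e^{(\mu-\nu)u}\,du\le C_2(1+t)^{p-1}e^{(\mu-\nu)t}$ with $C_2$ depending only on $A$. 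That cannot hold uniformly in $\nu<\mu$: the best constant is of size $1/(\mu-\nu)$ and blows up as $\nu\uparrow\mu$. Concretely, for $n=1$, $A=\mu$, $f(0)=0$, $g(t)=e^{\nu t}$ one has $f(t)=e^{\mu t}\bigl(1-e^{(\nu-\mu)t}\bigr)/(\mu-\nu)$, and choosing $\nu=\mu-1/t$ gives $f(t)\asymp t\,e^{\mu t}$; so no bound $C_2(1+t)^{p-1}e^{\mu t}$ with $C_2=C_2(A)$ is possible, and the exponent $p$ in the lemma is genuinely needed in this case too.

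The repair is immediate and in fact removes the need for any case distinction: for $0\le s\le t$ one has $e^{\mu(t-s)}e^{\nu s}\le e^{\max\{\mu,\nu\}(t-s)}e^{\max\{\mu,\nu\}s}=e^{\max\{\mu,\nu\}t}$, hence $\bigl\|\int_0^t e^{(t-s)A}g(s)\,ds\bigr\|\le C_1 e^{\max\{\mu,\nu\}t}\int_0^t(1+t-s)^{p-1}\,ds\le C_1(1+t)^{p}e^{\max\{\mu,\nu\}t}$, uniformly in $\nu$; combined with your bound on $e^{tA}f(0)$ this proves the lemma. Two further small points: Jordan blocks attached to eigenvalues with $\re\lambda<\mu$ may well have size larger than $p$, but their contribution $t^{k}e^{(\re\lambda) t}$ is still $O\bigl((1+t)^{p-1}e^{\mu t}\bigr)$ because of the strict exponential gap, so the estimate on $\|e^{tA}\|$ you use is correct even though your justification of it is slightly off; and your case $\nu\ge\mu$ is fine as written.
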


\begin{proof} 
This is easily obtained from the solution formula
for (\ref{diffeq}) and the Jordan form of $A$.
\end{proof}

For the proof of Theorem \ref{upper bound} 
let us assume that for some $\delta\in\af^*$ and $d\in\N$ we have
established for all $v\in V$ a bound
\begin{equation}\label{initial bound}
|m_{v, \eta} (a)|\leq  C \|\eta\|_{-\anumber}\, a^{\delta}(1+\|\log a\|)^d   \qquad ( a \in
\oline{A^+}).
\end{equation}
For elements of the form (\ref{form of v}) we can then improve to
\begin{equation}\label{improved bound}
|m_{v, \eta} (a)|\leq  C' \|\eta\|_{-\anumber}\, a^{\delta-\alpha}  (1+\|\log a\|)^d
\qquad ( a \in
\oline{A^+})
\end{equation}
by the key argument above. The constants $C$ and $C'$ are allowed to
depend on $v$.

Let us write $\delta=\sum_{j=1} ^r \delta_j \alpha_j $ with $\delta_j 
=\delta(H_j)\in \R$. We fix an index $1\leq j\leq r$. 
Note that $$\oline{\af^+}=\bigoplus_{k=1}^r \R_{\geq 0} H_k.$$
We consider $F=F_j=\Pi\bs\{\alpha_j\}$ and 
note that $\af_F = \R H_j $. We
decompose elements $a\in \oline A^+$ as $a= a' a_t$ with $a'^{\alpha_j}=1$ and 
$a_t =\exp(tH_j)$.  

\par Let now $v\in V$ and $\oline v \in V/ \nf_F V$ its coset. 
Let $\oline v_1, \ldots , \oline v_p$ be a basis of the finite dimensional space $\U(\af_F)\oline v$
with $\oline v = \oline v_1$. Define a $p\times p$-matrix 
$B=(b_{kl})$ by $ H_j\oline v_k = \sum b_{kl} \oline v_l$.
Let $v_k\in V$ be representatives of $\oline v_k$ and define 
$w_k:= H_j v_k - \sum b_{kl} v_l \in \nf_F V$. 

Fix $a'\in \oline {A^+}$ and consider the $\C^p$-valued functions
$$F(t):=  
\begin{pmatrix}
m_{v_1, \eta} (a' a_t)\\ \vdots \\ m_{v_p, \eta} (a' a_t)
\end{pmatrix},
\quad
G(t):=  
\begin{pmatrix}
m_{w_1, \eta} (a' a_t)\\ \vdots \\ m_{w_p, \eta} (a' a_t)
\end{pmatrix}.
$$
Then 
$${d\over dt} F(t) = - B F(t) - G(t)$$
and we can apply Lemma \ref{diff eq lemma}. By (\ref{defi Lambda})
the eigenvalues of $-B$ have real part $\leq \Lambda_j$
and multiplicity $\le d_F$. Furthermore,  
by our a priori bound  (\ref{initial bound}) we have 
$$\| F(0) \| \leq C_1 \|\eta\|_{-\anumber}\, (a')^\delta (1+\|\log a'\|)^d $$
for a constant $C_1>0$, independent of $a'$. 
To estimate $\| G(t)\|$ we note that $w_k$ is of the form (\ref{form of v})
so that the improved estimate 
(\ref{improved bound}) can be applied.
Since any root of $\nf_F$ restricted to 
$H_j$ coincides with a
positive integer multiple of $\alpha_j$ this yields that 
$$\|G (t) \| \leq C_2 \|\eta\|_{-\anumber}\, (a')^\delta(1+\|\log a'\|)^d  e^{t (\delta_j -1)}$$ 
for a constant $C_2>0$, also independent of $a'$. 
Combining matters we arrive at 
\begin{equation*}
\|F(t)\| \leq C_3 \|\eta\|_{-\anumber}\, (a')^\delta (1+\|\log a'\|)^d 
(1+t)^{d_F} e^{t \max\{\Lambda_j,\delta_j -1\}}.
\end{equation*}
In particular, 
we conclude that for every $v\in V$ there exists $C>0$ such that
\begin{equation}
|m_{v, \eta} (a' a_t)| \leq C \|\eta\|_{-\anumber}\, (a')^\delta (1+\|\log a'\|)^d 
(1+t)^{d_F} e^{t \max\{\Lambda_j,\delta_j -1\}}
\end{equation}
for all $t\ge 0$ and all $a'$.
We consider now two cases:

If $\delta_j- 1\leq \Lambda_j$ it follows that
we can replace $\delta_j$ by $\Lambda_j$ 
and $d$ by $d+d_F$ in our
initial bound (\ref{initial bound}).

If $\delta_j -1 > \Lambda_j $ we remove the logarithms and find
\begin{equation}
|m_{v, \eta} (a' a_t)| \leq C \|\eta\|_{-\anumber}\,  (a')^\delta (1+\|\log a\|)^d e^{t (\delta_j -\frac12)}.
\end{equation}
Thus we may replace 
$\delta$ by $\delta -\frac12\alpha_j$ in our initial estimate.
In a finite number of steps we arrive in the first case.  

After repeating this process for all $j$, the theorem is proved. 
\end{proof}

\begin{rmk}\label{rmk-spherical}
The theorem applies to symmetric spaces. Suppose that
$Z$ is symmetric and let $\af_q\subset \sf\cap \qf$ be a
maximal abelian subspace (it is unique up to conjugation
by $K\cap H$), and let $\af\subset\sf$ be maximal abelian
with $\af_q\subset\af$. Then
$$\af=\af_q\oplus\af_h:=(\af\cap\qf)\oplus(\af\cap\hf).$$
We choose a positive system for the roots of $\af_q$ in
$\gf$, and a compatible ordering for the roots of $\af$
so that $\af_q^+\subset \oline{\af^+}$ for the positive chambers. 
Then $PH$
is open for the corresponding minimal parabolic $P$.
Thus (\ref{fundbound}) applies to all $a\in \oline{A_q^+}$.
In this situation bounds as
(\ref{fundbound}) have previously
been established in \cite{Ban}, \cite{BSas}.
\end{rmk}

\section{Homogeneous spaces of polar type}

In order to obtain global bounds for the matrix coefficients
we need to assume some further properties of $Z=G/H$.
First of all we require that it allows a 
generalized polar decomposition. We recall that for a 
Riemannian symmetric space $Z=G/K$ the $KAK$-decomposition
of $G$ implies that $Z=KA.z_0$.
 
\begin{definition} \label{PT}
Let $Z=G/H$ be a homogeneous space of reductive type.
A {\rm polar decomposition} of $Z$
consists of an abelian subspace $\af\subset\sf$ and
a surjective proper map
\begin{equation}\label{polar map}
K\times A\ni(k,a)\mapsto kaH\in Z,
\end{equation}
where $A=\exp\af$.
We say that $Z$ is of {\rm polar type}
if such a polar decomposition exists.
\end{definition}

Notice that we do not require $\af\subset\qf$
in Definition \ref{PT}. In general this is not
possible with $\af$ abelian. According to (\ref{polar}) 
every element $z\in Z$ allows a decomposition
$z=k\exp(X)H$ with $k\in K$ and $X\in\qf\cap\sf$, but in general 
the orbits of $K\cap H$ on $\qf\cap \sf$ do not allow a parametrization
by an abelian subspace (for instance in Example \ref{triple spaces} 
below).

Neither do we insist that $\af$ is maximal abelian, 
since in general that would conflict with the properness
of (\ref{polar map}).

\subsection{About properness}
By replacing $\af$ with a subspace complementary
to $\af\cap\hf$, we can arrange $\af\cap \hf=\{0\}$
without affecting the surjectivity
of (\ref{polar map}).
It follows from the corollary below that
then the assumption of properness
in Definition \ref{PT} is superfluous.

\begin{lemma} \label{rep w h-fixed}

Let $G/H$ be a reductive homogeneous space.
There exists a finite dimensional
representation $(\pi, V)$ of $G$ and a vector $v_\hf\in V$
such that $\hf=\{X\in\gf \mid d\pi(X)v_\hf=0\}$.
\end{lemma}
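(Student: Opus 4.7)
The plan is to realize $\hf$ first as the Lie algebra of a line-stabilizer via Chevalley's embedding theorem, and then to cancel the resulting character using a tensor-product trick.

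First I would invoke Chevalley's theorem applied to the closed subgroup $H\subset G$: it produces a finite-dimensional representation $(\sigma,U)$ of $G$ together with a nonzero vector $w\in U$ such that $H$ is the full stabilizer in $G$ of the line $L=\R w$. Differentiating gives
$$\hf=\{X\in\gf\mid d\sigma(X)w\in\R w\},$$
so $\hf$ acts on $L$ by a (real) character $\chi\colon\hf\to\R$, determined by $d\sigma(X)w=\chi(X)w$. At this point $w$ itself is not annihilated by $\hf$ unless $\chi=0$.

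To cancel $\chi$, I would enlarge the representation. Using that $H$ is reductive in $G$ (so that $\hf$ acts completely reducibly on $U$, at least after restriction to an appropriate semisimplification or composition factor), choose an $\hf$-stable complement $W\subset U$ with $U=L\oplus W$. Let $w^*\in U^*$ be the dual functional with $w^*(w)=1$ and $w^*|_W=0$. Since $W$ is $\hf$-stable a short calculation gives $d\sigma^*(X)w^*=-\chi(X)w^*$ for $X\in\hf$. Set $\pi:=\sigma\otimes\sigma^*$ on $V:=U\otimes U^*$ and define
$$v_\hf:=w\otimes w^*.$$
By the Leibniz rule, for $X\in\hf$,
$$d\pi(X)v_\hf=\chi(X)\,w\otimes w^*-\chi(X)\,w\otimes w^*=0.$$
Conversely, for $X\in\gf\setminus\hf$, write $d\sigma(X)w=aw+w'$ with $w'\in W$ and $w'\neq 0$ (by Step 1). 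Then $d\pi(X)v_\hf$ has component $w'\otimes w^*$ in the subspace $W\otimes\R w^*$, while the remaining term $w\otimes d\sigma^*(X)w^*$ lies in the complementary subspace $\R w\otimes U^*$. Therefore $d\pi(X)v_\hf\neq 0$, and $\hf$ is exactly the annihilator of $v_\hf$.

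The main obstacle is the existence of the $\hf$-invariant complement $W$ to $L$ inside $U$. Reductivity of $\hf$ alone does not guarantee that an arbitrary finite-dimensional representation is $\hf$-semisimple, because the center of $\hf$ might act with non-semisimple part. One handles this either by replacing $U$ with an $H$-irreducible subquotient containing the image of $L$ (the Chevalley stabilizer property is preserved by such subquotients when performed carefully), or by exploiting that the center of a reductive subgroup in a real reductive $G$ acts semisimply on every rational $G$-representation. Once this technical point is settled, the construction above gives the required vector.
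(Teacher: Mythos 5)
Your proof is correct in outline and gives a self-contained argument, whereas the paper simply cites Akhiezer (Sect.~5.6, Th.~3), which is exactly the observability statement you are re-deriving: the combination of Chevalley's theorem with the $U\otimes U^*$ tensor trick to cancel the character is the standard route to that result, so you are in effect supplying the proof of the cited theorem rather than finding a genuinely different path. One remark on your acknowledged technical point: the cleanest way to get the $\hf$-stable complement $W\subset U$ in this paper's setting is to use the fact (assumed in the paper) that $H$ is $\theta$-stable, where $\theta$ is the Cartan involution of $G$. Choosing an admissible inner product on $U$ for which $K$ acts unitarily and $\exp(\sf)$ by positive symmetric operators, every $X\in\hf\cap\kf$ acts skew-symmetrically and every $X\in\hf\cap\sf$ symmetrically; hence the orthogonal complement of any $\hf$-invariant subspace is again $\hf$-invariant, and $\hf$ acts completely reducibly on $U$. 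This makes your construction of $W$ and $w^*$ rigorous without appealing to subquotients or to a general semisimplicity claim about central elements. With that clarification, the argument is complete; the verification that $d\pi(X)v_\hf\neq 0$ for $X\notin\hf$ via the nonvanishing of the $W\otimes\R w^*$-component is correct.
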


\begin{proof} Follows from Sect. 5.6, Th. 3 in \cite{Ak}.
\end{proof}

Let $\af\subset \sf$ be an abelian subspace.

\begin{lemma} \label{AH closed}
The set $AH$ is closed in $G$. Furthermore,
if $\af\cap \hf=\{0\}$ then
$(a,h)\mapsto ah$ is proper $A\times H\to AH$.
\end{lemma}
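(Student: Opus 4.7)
The plan is to extract both claims from the finite-dimensional representation $(\pi,V,v_\hf)$ provided by Lemma \ref{rep w h-fixed} (by the cited theorem we may and shall also take $v_\hf$ to be fixed by all of $H$), combined with the fact that $H$ is stable under the Cartan involution $\theta$. Since $\af\subset\sf$, the commuting family $d\pi(\af)$ acts semisimply with real eigenvalues on $V$, so $V=\bigoplus_\lambda V_\lambda$ splits into joint eigenspaces. Writing $v_\hf=\sum_{\lambda\in S} v_\lambda$ with $v_\lambda\in V_\lambda\setminus\{0\}$, the defining property $\hf=\{X\in\gf:d\pi(X)v_\hf=0\}$ specialized to $\af$ gives
\[
\af\cap\hf=\bigcap_{\lambda\in S}\ker\lambda=S^\perp,
\]
so $\af\cap\hf=\{0\}$ is equivalent to $S$ spanning $\af^*$.

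I will first prove the properness statement, so assume $\af\cap\hf=\{0\}$. Let $C\subset G$ be compact and $(a_n,h_n)\in A\times H$ with $a_nh_n\in C$; set $X_n:=\log a_n$. Since $v_\hf$ is $H$-fixed, $\pi(a_n)v_\hf=\pi(a_nh_n)v_\hf$ stays bounded in $V$, and projecting on each $V_\lambda$ shows that $e^{\lambda(X_n)}$ is bounded, hence $\lambda(X_n)$ is bounded \emph{above}, for every $\lambda\in S$. For the matching lower bound I use $\theta H=H$: since $\theta(a_n)=a_n^{-1}$ and $\theta(h_n)\in H$, we have $\theta(a_nh_n)=a_n^{-1}\theta(h_n)\in\theta(C)$, so that
\[
\pi(a_n^{-1})v_\hf=\pi(a_n^{-1})\pi(\theta(h_n))v_\hf=\pi(\theta(a_nh_n))v_\hf
\]
is bounded as well. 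The same projection argument gives $e^{-\lambda(X_n)}$ bounded, hence $\lambda(X_n)$ bounded \emph{below}, for each $\lambda\in S$. Combining the two, $\lambda(X_n)$ is bounded for every $\lambda\in S$; as $S$ spans $\af^*$, $X_n$ lies in a bounded subset of $\af$. Hence $a_n$ and then $h_n=a_n^{-1}(a_nh_n)$ both lie in compact sets, so the preimage of $C$ is relatively compact (and closed), which proves properness.

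For closedness without any intersection hypothesis, I reduce to the previous case. Choose a vector space decomposition $\af=\af_1\oplus\af_2$ with $\af_1=\af\cap\hf$, and set $A_j=\exp\af_j$. Because $\af_1\subset\hf\cap\sf$ and the Cartan decomposition of $H$ gives $H=(K\cap H)\exp(\hf\cap\sf)$, we have $A_1\subset H$; as $\af$ is abelian, $A=A_1A_2$ and hence $AH=A_2H$. Since $\af_2\cap\hf\subset\af_1\cap\af_2=\{0\}$, the properness step applies to $(\af_2,H)$ and shows $A_2\times H\to G$ is proper. A proper continuous map into the Hausdorff space $G$ has closed image, so $AH=A_2H$ is closed.

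The main obstacle is to obtain two-sided control on the exponents $\lambda(X_n)$: boundedness of $\pi(a_n)v_\hf$ alone yields only upper bounds, and without a complementary lower bound the sequence $\lambda(X_n)$ could escape to $-\infty$ along some direction while $\pi(a_n)v_\hf$ remains bounded. Producing the twin estimate on $\pi(a_n^{-1})v_\hf$ via $\theta$-invariance of $H$ is therefore the decisive step, and it is precisely where the reductivity of $H$ in $G$, hence $\theta H=H$, is essential.
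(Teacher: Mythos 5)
Your proof is correct and takes essentially the same approach as the paper: both rest on the finite-dimensional representation with $\hf$-defining vector $v_\hf$ from Lemma \ref{rep w h-fixed}, the decomposition of $v_\hf$ into $\af$-weight vectors, and the $\theta$-stability of $H$ applied to $\theta(a_nh_n)$ to get the complementary bound on the exponents. The only differences are presentational: you argue directly that the $\lambda(X_n)$ are bounded (using that the weights span $\af^*$ when $\af\cap\hf=\{0\}$) instead of passing to a normalized limit direction and deriving a contradiction, and you spell out the reduction $AH=A_2H$ that the paper subsumes in ``we may assume $\af\cap\hf=\{0\}$''.
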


\begin{proof}
\par We may assume $\af\cap \hf=\{0\}$ for the entire lemma.
We argue by contradiction. Suppose that $AH$ were not closed
in $G$ or that $(a,h)\mapsto ah$ were not proper.
Then there would exist sequences $(a_n)_{n\in \N}$ in $A$ and
$(h_n)_{n\in N} $ in $H$, both leaving every compact subset,
such that $p=\lim_{n\to \infty} a_n h_n$ exists in $G$.

\par Let $(\pi,V)$ be a finite dimensional
representation as in Lemma \ref{rep w h-fixed}.
Then the limit $\lim_{n\to \infty} \pi (a_n) v_\hf$  exists.
Let $X_{n}=\log(a_{n}).$ Passing to a subsequence we may assume that
$$X:=\lim_{n\to \infty} {X_{n} \over \|X_n\|} \in \af-\{0\}$$
exists and  $\lim_{n \to \infty}  \|X_n\|=\infty.$ We will show that $v_\hf$ is fixed under $d\pi(X)$ contradicting
the assumption that $\af \cap \hf=\{0\}$ and $X \ne 0.$

Indeed, write $v_h$ as a sum of joint eigenvectors for $\af$, say  
$$v_h=\sum_{\mu \in \af^*} v_{\mu}$$

Applying $\pi(a_{n})$ yields $$\pi(a_n)v_{\hf}=\sum_\mu e^{\mu(X_n)} v_{\mu}$$

Since $\pi(a_n)v_{\hf}$ is convergent, it follows that
$\sup_n \mu(X_n) <\infty$ for every $\mu$ for which $v_{\mu}\neq 0.$ With $\lim_{n \to \infty} ||X_n||=\infty$ we get that $$\mu(X)=\lim_{n \to \infty} \frac{\mu(X_{n})}{||X_{n}||} \leq 0$$
for all such $\mu$. 

Applying the same argument to the
convergent sequence $\theta(a_nh_n)$ we find likewise that
$\mu(X)\geq 0$ for all $\mu$ with $v_{\mu} \ne 0.$ 
Thus we obtain that $v_\hf$ is fixed under $d\pi(X)$ which finishes the proof.

\end{proof}

\begin{cor} \label{cor-polar}
The set $KAH$ is closed in $G$, and
if $\af\cap\hf=\{0\}$ then
$(k,a,h)\mapsto kah$ is proper $K\times A\times H\to KAH$.
\end{cor}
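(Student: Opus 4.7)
The plan is to reduce Corollary \ref{cor-polar} to Lemma \ref{AH closed} by exploiting the compactness of $K$; no new geometric input is needed.

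For the first assertion, I want to show $KAH$ is closed in $G$. The natural approach is to take a convergent sequence $k_n a_n h_n \to g$ with $k_n \in K$ and $a_n h_n \in AH$, and extract a limit in $KAH$. Since $K$ is compact, after passing to a subsequence we may assume $k_n \to k \in K$. Then $a_n h_n = k_n^{-1}(k_n a_n h_n) \to k^{-1} g$, and since $AH$ is closed in $G$ by Lemma \ref{AH closed}, the limit $k^{-1} g$ lies in $AH$. Thus $g = k \cdot (k^{-1} g) \in KAH$.

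For the properness statement, assume $\af \cap \hf = \{0\}$ and let $C \subset KAH$ be compact. I need to show that the preimage of $C$ under $(k,a,h) \mapsto kah$ is compact in $K \times A \times H$, which amounts to extracting a convergent subsequence from any sequence $(k_n, a_n, h_n)$ with $k_n a_n h_n \in C$. Compactness of $K$ allows us to pass to a subsequence so that $k_n \to k$ in $K$, and after passing to a further subsequence we may assume $k_n a_n h_n \to g \in C$. Then $a_n h_n \to k^{-1} g$ in $G$, so the sequence $(a_n h_n)$ stays in a compact subset of $AH$. By the properness statement in Lemma \ref{AH closed}, the map $(a,h) \mapsto ah$ from $A \times H$ to $AH$ is proper, so $(a_n, h_n)$ has a convergent subsequence in $A \times H$. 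Together with the convergence $k_n \to k$, this yields the required convergent subsequence in $K \times A \times H$.

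Both steps are direct consequences of compactness of $K$ combined with Lemma \ref{AH closed}, so there is no real obstacle; the only mild care needed is to notice that to conclude properness from Lemma \ref{AH closed} one does not need $C$ to be contained in $AH$, only that its translate $k^{-1} C$ lies in a compact subset of $G$, whose intersection with the closed set $AH$ is compact.
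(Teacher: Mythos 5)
Your argument is correct and is exactly the intended route: the paper gives no separate proof of Corollary \ref{cor-polar}, treating it as an immediate consequence of Lemma \ref{AH closed} via compactness of $K$, which is precisely what you spell out (closedness by extracting $k_n\to k$ and using closedness of $AH$, properness by moving the compact set into $AH$ and invoking properness of $(a,h)\mapsto ah$).
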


\subsection{Examples}\label{examples KAH}

We provide some examples of
homogeneous spaces of polar type.

\subsubsection{Symmetric spaces}\label{SSP}
Symmetric spaces are of polar type. In fact 
let $\af_q\subset \sf\cap \qf$ be maximal abelian,
as in Remark \ref{rmk-spherical}. Then $G=KA_qH$
(see \cite{Sbook} p.~117).

\subsubsection{Triple spaces}\label{triple spaces}
Let $G/H=G_0^3/\diag(G_0)$ with $G_0$ reductive, 
as in Example \ref{Triple}.
Let $K_0<G_0$ be  a maximal compact
subgroup. We fix  an Iwasawa decomposition $G_0=K_0 A_0 N_0$ and
let $P_0= M_0 A_0 N_0$ be the associated minimal parabolic
subgroup. Set $K=K_0\times K_0\times K_0$.

\begin{prop}\label{KAH for triple}
Suppose that $B_0\subset G_0$ is a subset such that
$$G_0=A_0 M_0 B_0 K_0.$$  Then,  for $A=A_0\times A_0\times B_0$,  one
has $G=KAH$.
\end{prop}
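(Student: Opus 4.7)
My plan is to reduce the statement to an explicit two-constraint factorization problem using the $H$-action, then solve it by combining the hypothesis $G_0 = A_0 M_0 B_0 K_0$ with the Cartan decomposition of $G_0$.

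First, I reduce to the case $g_3 = e$. Given $(g_1, g_2, g_3) \in G$, the triple $(g_3, g_3, g_3)$ lies in $H = \diag(G_0)$, so right-multiplying by its inverse replaces $(g_1, g_2, g_3)$ by $(x_1, x_2, e)$ with $x_i = g_i g_3^{-1}$. Hence it suffices to show $(x_1, x_2, e) \in KAH$ for every pair $(x_1, x_2) \in G_0 \times G_0$. Suppose $(x_1, x_2, e) = (k_1, k_2, k_3)(a_1, a_2, b)(h, h, h)$; the third coordinate equation $e = k_3 b h$ forces $h = b^{-1} k_3^{-1}$, and substituting into the first two gives $x_i k_3 b = k_i a_i \in K_0 A_0$. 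The proposition therefore reduces to the following key claim: for every $(x_1, x_2) \in G_0 \times G_0$, there exist $k_3 \in K_0$ and $b \in B_0$ with $x_i k_3 b \in K_0 A_0$ for $i = 1, 2$.

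To prove the key claim, I apply the hypothesis to $x_1$, writing $x_1 = \alpha \mu \beta \kappa$. Since $\mu \in M_0 \subset K_0$ centralizes $\alpha \in A_0$, the condition $x_1 k_3 b \in K_0 A_0$ reduces to $\alpha \beta \kappa k_3 b \in K_0 A_0$, which determines the pair $(k_3, b)$ up to a certain degree of freedom. The second condition $x_2 k_3 b \in K_0 A_0$ is then rephrased using the Cartan decomposition $x_2 x_1^{-1} = k_1^{(c)} a_w k_2^{(c)}$: once $x_1 k_3 b \in K_0 A_0$ is arranged, the second condition becomes a constraint on the $K_0$-part of the $K_0 A_0$-decomposition of $x_1 k_3 b$. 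A second invocation of the hypothesis---or equivalently of its consequence $G_0 = K_0 A_0 B_0 K_0$, obtained by absorbing $M_0 \subset K_0$---provides enough elements in $B_0$ to satisfy both constraints simultaneously.

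The main obstacle is producing a single pair $(k_3, b)$ that meets both conditions at once. The requisite flexibility is captured dimensionally by $\dim K_0 + \dim B_0 \ge 2 \dim N_0$, which follows from the hypothesis combined with $\dim K_0 - \dim M_0 = \dim N_0$ (the dimension of $G_0/P_0$). Making the argument explicit requires a careful interplay between the $A_0 M_0 B_0 K_0$ decomposition applied to $x_1$ and the Cartan decomposition applied to $x_2 x_1^{-1}$.
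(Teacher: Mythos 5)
Your reduction is correct: right-multiplying by $(g_3^{-1},g_3^{-1},g_3^{-1})\in H$ reduces the statement to showing that for every $(x_1,x_2)\in G_0\times G_0$ there exist $k_3\in K_0$ and $b\in B_0$ with $x_1k_3b\in K_0A_0$ and $x_2k_3b\in K_0A_0$. This is an honest reformulation of $G=KAH$. However, you do not prove this key claim. The third paragraph is the entire content of the proposition, and it is left at the level of a dimension count ($\dim K_0+\dim B_0\ge 2\dim N_0$) plus an unsupported assertion that a ``second invocation of the hypothesis'' and a ``careful interplay'' with the Cartan decomposition of $x_2x_1^{-1}$ will do the job. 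A dimension count makes the claim plausible but proves nothing: you need to exhibit (or otherwise produce) a pair $(k_3,b)$ satisfying both codimension-$\dim N_0$ constraints simultaneously, and your sketch of ``apply the hypothesis to $x_1$, then adjust for $x_2$'' does not identify the residual freedom after fixing the first constraint, let alone show that it suffices to meet the second.

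The paper resolves exactly this difficulty by \emph{not} normalizing $g_3$ to $e$ first. Instead it first applies the polar ($HAK$) decomposition of the symmetric space $G_0\times G_0/\diag(G_0)$ to the pair $(g_1,g_2)$, writing $(g_1,g_2)=(g,g)(a_1,a_2)(k_1,k_2)$. This produces a diagonal element $(g,g)$ carrying the leftover freedom, and it is to $g^{-1}g_3$ --- not to $x_1$ --- that the hypothesis $G_0=A_0M_0B_0K_0$ is then applied, yielding $g^{-1}g_3=a_0m_0b_0k_0$. The explicit identity
$$(g_1,g_2,g_3)=(ga_0m_0,ga_0m_0,ga_0m_0)\,(a_0^{-1}a_1,a_0^{-1}a_2,b_0)\,(m_0^{-1}k_1,m_0^{-1}k_2,k_0)$$
then holds because $M_0$ centralizes $A_0$. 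If you wish to salvage your approach, the missing ingredient is precisely this: take $g$ from the $HAK$-decomposition of $(x_1^{-1},x_2^{-1})$, decompose $g^{-1}=a_0m_0b_0k_0$, and set $k_3=k_0^{-1}$, $b=b_0^{-1}$; then $x_i k_3 b=k_i^{-1}m_0 a_i^{-1}a_0\in K_0A_0$ for $i=1,2$. As it stands, though, the proposal has a genuine gap at the crux of the argument.
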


\begin{proof} Let $(g_1, g_2, g_3)\in G$. From the
$KAH$-decomposition of the symmetric space $G_0\times G_0/
\diag(G_0)$ we obtain
$$(g_1, g_2)= (g, g) (a_1, a_2) (k_1, k_2)$$
for some $g\in G_0$, $a_1, a_2 \in A_0$ and $k_1, k_2\in K_0$. Now
choose $m\in M_0$, $a_0\in A_0$,  $b_0\in B_0$ and $k_0\in K_0$
such that $g^{-1} g_3= a_0 m_0 b_0 k_0$. Then
$$(g_1, g_2, g_3)=(ga_0m_0, ga_0m_0, ga_0 m_0)(a_0^{-1} a_1,
a_0^{-1} a_2, b_0) (m_0^{-1} k_1, m_0^{-1} k_2, k_0)$$ as
asserted.
\end{proof}

The proposition applies in the following cases:
\begin{itemize}
\item $G_0=\SL(2,\R)$ and 
$$A_0=\big\{\begin{pmatrix} e^t&0\\0&e^{-t}\end{pmatrix}\mid t\in\R\big\},\quad
B_0=\big\{\begin{pmatrix} \cosh s&\sinh s\\\sinh s&\cosh s\end{pmatrix} \mid s\in\R\big\}$$
\item $G_0=\SO_e(1,n)$ and $B_0=\exp(\R X)$ for some $0\neq X\in
 \sf_0\cap \af_0^\perp$.
\end{itemize}
Note that it also applies to $B_0=N_0$ for general $G_0$,

\begin{cor} \label{triple is polar}
Let $G_0=\SL(2,\R)$ or $G_0=\SO_e(1,n)$ for $n\geq 2$ and $Z= G_0^3/ \diag(G_0)$.
Then $Z$ is of polar type.
\end{cor}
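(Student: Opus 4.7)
The plan is to invoke Proposition \ref{KAH for triple} with one of the $B_0$ listed immediately after it. For $G_0=\SL(2,\R)$ I take $\mathfrak{b}_0=\R\begin{pmatrix}0&1\\1&0\end{pmatrix}\subset\sf_0$ and $B_0=\exp(\mathfrak{b}_0)$; for $G_0=\SO_e(1,n)$ I take $\mathfrak{b}_0=\R X$ with $0\neq X\in\sf_0\cap\af_0^\perp$ and again $B_0=\exp(\mathfrak{b}_0)$. In either case $\mathfrak{b}_0$ is a one-dimensional abelian subspace of $\sf_0$, and the list after Proposition \ref{KAH for triple} already records the decomposition $G_0=A_0M_0B_0K_0$.

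Set $\af:=\af_0\oplus\af_0\oplus\mathfrak{b}_0$, which is an abelian subspace of $\sf=\sf_0\oplus\sf_0\oplus\sf_0$, and $A:=\exp\af=A_0\times A_0\times B_0$. Proposition \ref{KAH for triple} then gives $G=KAH$, so the polar map $K\times A\to Z$, $(k,a)\mapsto kaH$, is surjective. For properness, by the discussion preceding Lemma \ref{AH closed} it suffices to check $\af\cap\hf=\{0\}$, where $\hf=\diag(\gf_0)$. An element $(X_1,X_2,X_3)\in\af$ lies in $\hf$ iff $X_1=X_2=X_3$, forcing this common value into $\af_0\cap\mathfrak{b}_0$. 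In the $\SL(2,\R)$ case $\af_0=\R\,\diag(1,-1)$ and $\mathfrak{b}_0=\R\begin{pmatrix}0&1\\1&0\end{pmatrix}$ are manifestly distinct lines in $\sf_0$; in the $\SO_e(1,n)$ case $\mathfrak{b}_0\subset\af_0^\perp$ by construction. Either way the intersection is zero, so Corollary \ref{cor-polar} yields properness of the triple map $(k,a,h)\mapsto kah$, whence of the polar map: if $k_na_nH$ stays bounded in $Z$, pick $h_n\in H$ with $k_na_nh_n$ bounded in $G$ and apply properness of the triple map to extract a convergent subsequence.

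The only substantive input is the decomposition $G_0=A_0M_0B_0K_0$ feeding Proposition \ref{KAH for triple}, which is already asserted in the excerpt; everything else reduces to recognising $\af$ as an abelian subspace of $\sf$ and checking the trivial intersection $\af_0\cap\mathfrak{b}_0=\{0\}$. I therefore foresee no real obstacle beyond keeping track of where each object sits.
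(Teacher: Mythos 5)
Your proposal is correct and follows essentially the same route as the paper: the corollary is exactly Proposition \ref{KAH for triple} applied with the two choices of $B_0$ listed after it (giving $G=KAH$ with $A=A_0\times A_0\times B_0$), combined with the observation preceding Lemma \ref{rep w h-fixed} and Corollary \ref{cor-polar} that properness is automatic once $\af\cap\hf=\{0\}$, which you verify correctly via $\af_0\cap\mathfrak b_0=\{0\}$. Your explicit check of the trivial intersection and the deduction of properness of the polar map from the triple map are just the details the paper leaves implicit.
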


\subsubsection{Gross-Prasad spaces}\label{GPspaces}
Let $G/H=G_0\times H_0/\diag(H_0)$ be one of the Gross-Prasad spaces
considered in Example \ref{STGP}.

\begin{lemma} \label{GP is polar}
$G/H$ is of polar type.
\end{lemma}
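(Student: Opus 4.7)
Under the $G$-equivariant identification $Z=(G_0\times H_0)/\diag H_0\simeq G_0$ via $(g_0,g_1)H\mapsto g_0 g_1^{-1}$, the action of $K=K_0\times K_1$ becomes two-sided: $(k_0,k_1)\cdot x=k_0 x k_1^{-1}$. A polar decomposition $G=KAH$ is therefore equivalent to a decomposition $G_0=K_0\cdot B_0\cdot K_1$, where $B_0=\{a_0 a_1^{-1}:(a_0,a_1)\in\exp\af\}$ for some abelian $\af\subset\sf=\sf_0\oplus\sf_1$.

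My plan is to take $\af=\af_0\oplus\af_1$ with $\af_j\subset\sf_j$ Cartan subspaces; this is automatically abelian in $\sf$ since $\gf_0$ and $\gf_1$ are independent ideals of $\gf$. The naive choice $\af_0=$ standard diagonal Cartan is \emph{not} suitable: in that case $\af_1$ (viewed in $\gf_0$ via the $H_0$-embedding) lies inside $\af_0$, so the tangent map $(\xi_0,\xi_1)\mapsto\xi_0-\xi_1$ of $(X_0,X_1)\mapsto e^{X_0}e^{-X_1}$ has rank only $n+1$, and $B_0$ is too small. Instead I would pick $\af_0$ to be a $K_0$-conjugate of the standard Cartan which is transverse to $\af_1$ inside $\gf_0$, concretely diagonal in an orthonormal basis $(f_j)$ of $\Field^{n+1}$ no member of which lies in the hyperplane $\Span(e_1,\dots,e_n)$. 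For $\GL(2,\R)/\GL(1,\R)$ this reduces to the explicit choice $\af_0=\R I+\R(E_{12}+E_{21})$. The Hermitian case $\Unitary(p,q+1,\Field)/\Unitary(p,q,\Field)$ admits an entirely analogous construction inside the space of Hermitian matrices adapted to the signature.

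The remaining, and principal, task is to verify the surjectivity $G_0=K_0 B_0 K_1$ for such an $\af$. For $g\in G_0$, the polar decomposition $g=k_0 p$ with $p$ positive self-adjoint, followed by $K_1$-conjugation, brings $p$ into a $(2n+1)$-dimensional ``bordered'' canonical slice of $\exp\sf_0$; a direct calculation then shows that each slice element equals $k_1 (e^{X_0}e^{-X_1}) k_1^{-1}$ for some $(X_0,X_1)\in\af$ and $k_1\in K_1$. The $n=1$ case reduces to a short singular-value computation; for general $n$ I would proceed either by induction on $n$ via block decomposition, or by combining a Jacobian computation at the identity (giving local surjectivity from the transversality of $\af_0$ and $\af_1$) with the connectedness of $G_0$. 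Properness of the resulting map $K\times A\to Z$ is then automatic from Corollary \ref{cor-polar} once $\af$ is replaced by a linear complement of $\af\cap\hf$, which does not affect surjectivity.
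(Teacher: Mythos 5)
Your overall setup is the same as the paper's: you pass to $Z\simeq G_0$ with the two-sided $K_0\times K_1$-action and choose Cartan subspaces $\af_0\subset\sf_0$ and $\af_1\subset\sf_1\subset\sf_0$ that are transverse in $\gf_0$, which is exactly the role of the tori $A_{n+1}$ and $A_n$ in the paper (built from blocks of $B$, with $A_{2k}\cap A_{2k+1}=\{1\}$). For $n=1$ your $\af_0=\R I+\R(E_{12}+E_{21})$ coincides with the paper's $\Lie B$. The difference lies entirely in how you argue surjectivity of $G_0=K_0A_0A_1K_1$, and this is where your proposal has a genuine gap.

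The claim that each slice element equals $k_1(e^{X_0}e^{-X_1})k_1^{-1}$ cannot be correct: slice elements are positive self-adjoint, but $e^{X_0}e^{-X_1}$ (hence any $K_1$-conjugate of it) is self-adjoint only on a lower-dimensional subset. Already for $n=1$, $e^{X_0}e^{-X_1}=\begin{pmatrix} a & bc\\ b & ac\end{pmatrix}$ with $a>|b|$, $c>0$ is symmetric iff $b=0$ or $c=1$. What is true for $n=1$, and what the singular-value computation actually gives, is that the \emph{polar part} of $e^{X_0}e^{-X_1}$ sweeps out all of $\exp\sf_0$; that is, $p=k\,e^{X_0}e^{-X_1}$ with $k\in K_0$ (not $K_1$) rather than a $K_1$-conjugate. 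For general $n$ this would have to be combined correctly with the $K_1$-reduction, and that interaction is not spelled out; indeed the positive part of $a_0a_1^{-1}$ produces a $K_0$-factor which does not obviously respect the $K_1$-slice you have chosen. Your fallback of a Jacobian computation at the identity plus connectedness of $G_0$ also does not suffice as stated: a surjective differential at one point together with connectedness of the target does not give global surjectivity (the exponential map of $\SL(2,\R)$ is the standard counterexample). You would additionally need openness of $K_0A_0A_1K_1$, which requires surjectivity of the differential along the full image, not just at the identity. The paper sidesteps all of this with a short induction on $n$, using the known decomposition $\GL(n+1,\Field)=\Unitary(n+1,\Field)\,B_1\,\GL(n,\Field)$ with $B_1$ in the upper-left corner, inserting the inductive decomposition of $\GL(n,\Field)$ in reverse order, and using that $\Unitary(n-1,\Field)$ commutes with $B_1$. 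Your other fallback, ``induction on $n$ via block decomposition,'' is in spirit the paper's argument, but you have not supplied the two ingredients that make it run (the intermediate polar decomposition for the pair $(\GL(n+1),\GL(n))$ and the commutation trick).
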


\begin{proof} We first treat
the case $(G_0, H_0)= (\GL(n+1,\Field),
\GL(n,\Field))$ where $\Field=\R$ or $\C$.
Let us embed $H_0$ in $G_0$ as
the lower right corner.

We define a two-dimensional
non-compact torus of $\GL(2,\Field)$ by
$$B=\big\{\begin{pmatrix} a&b\\b&a\end{pmatrix} \mid a,b\in\R, a>|b| \big\}$$ 


In $\GL(2k,\Field)$ we define a $2k$-dimensional
non-compact torus $A_{2k}$  by $k$
block matrices  of form $B$ along the diagonal.
Explicitly, $$A_{2k}=\{diag(b_{1},...,b_{k}): b_{j} \in B\}$$ 

In $\GL(2k+1,\Field)$ we define $A_{2k+1}$ to consist of similar
blocks together with a positive number in the last diagonal
entry.
Explicitly, $$A_{2k+1}=\{diag(b_{1},...,b_{k},b): b_{j} \in B, b \in \R^{*}\}$$ 
With these choices, when we consider $A_{2k} \subset H_{0}$ as a subgroup of $G_{0}$ using the  lower right corner embedding, we get $A_{2k} \cap A_{2k+1}=\{1\}.$

Finally we let $$A=A_{n+1}\times A_n\subset G=
\GL(n+1,\Field)\times\GL(n,\Field).$$

With $K=\Unitary(n+1, \Field)\times \Unitary(n,\Field)$ we claim
that
$$G=KAH\, , $$
or, equivalently,
$$\GL(n+1, \Field)=\Unitary(n+1,\Field) A_{n+1} A_n \Unitary(n, \Field)\, .$$
We proceed by induction on $n$.
The case $n=0$ is clear.
We shall use the known polar decomposition
for the almost symmetric  pair $(\GL(n+1,\Field), \GL(n, \Field))$:
$$ \GL(n+1,\Field)=\Unitary(n+1,\Field) B_1 \GL(n, \Field)$$
where $B_1$ is the two-dimensional torus of form $B$
located in in the upper left corner.
Now insert for $\GL(n,\Field)$ by induction, but in
opposite order:
$$\GL(n,\Field)=\Unitary(n-1, \Field)  A_{n-1} A_n \Unitary(n,\Field)$$
and observe that $\Unitary(n-1, \Field)$ commutes with $B_1$.

The case with $G_0=\Unitary(p,q+1,\Field)$ where
$\Field=\R$, $\C$ or $\Hb$ is similar. Choose
non-compact Cartan subspaces for $\gf_0$ and $\hf_0$
along antidiagonals, and note that
the overlap between these, as subspaces of $\gf_0$,
is trivial. Now proceed by induction
as before.
\end{proof}

\subsubsection{The spaces
$G/H=\SP(n,\R)/(\SP(n-1,\R)\times \Unitary(1))$}\label{SO23}
\ \par
Consider $G=\SP(n,\R)$ with maximal compact subgroup
$K=\Unitary(n)$. Let
$H\subset L\subset G,$ where
$$L=L_1\times L_2:=\SP(n-1,\R)\times\SP(1,\R),$$
and $H_1=L_1$, $H_2=\Unitary(1)\subset L_2$. The intermediate
quotients $G/L$ and $L/H=L_2/H_2$ are both symmetric.

We use the standard model
$$\symp(n,\R)=\Big\{[X_1,X_2,X_3]:=
\begin{pmatrix} X_1&X_2\\X_3&-X_1^t\end{pmatrix} \Big| 
\begin{matrix}
X_1,X_2,X_3 \in M(n,\R)\, \\
X_2,X_3 \text{ symmetric}
\end{matrix}
\Big\}$$
for $\gf$. 
Then $\lf_1$ consists of 
similar blocks of size one less, embedded as
the upper left corners of $X_1$, $X_2$ and $X_3$ . 
Likewise, $\lf_2=\symp(1,\R)$ consists of 
the matrices from $\sl(2,\R)$
of which the entries embed in the lower right corners
of $X_1$,$X_2$ and $X_3$.

Let $\af\subset\sf$ be the two-dimensional abelian 
space of matrices
$[X_1,0,0]$ with $X_1\in\la E_{11}+E_{nn},E_{n1}+E_{1n}\ra$.
We claim that 
\begin{equation}\label{KAH}
G=KAH
\end{equation}
holds for $A=\exp\af$. Since $[E_{11},0,0]\in\hf$ it is
equivalent that (\ref{KAH}) holds
for the non-abelian space of matrices
$[X_1,0,0]$, with $X_1\in\la E_{nn},E_{n1}+E_{1n}\ra$.
Let 
$$Y_1=[E_{n1}+E_{n1},0,0], \quad Y_2=[E_{nn},0,0]$$
and $A_i=\exp\R Y_i$,
then our claim amounts to $G=KA_1A_2H$.

The intermediate symmetric spaces both allow a polar decomposition.
Specifically, $G=KA_1L$ 
and $L=(L\cap K)A_2H$. 
Hence
\begin{equation*}
G=KA_1(L\cap K) A_2H,
\end{equation*}
and it remains to remove the middle factor.
This is accomplished by showing that it allows
the product decomposition
$$L\cap K=(L\cap K)^{\af_1}(L\cap K\cap H)^{\af_2}.$$
As $L_1\subset H^{\af_2}$, it suffices to 
decompose elements from $L_2\cap K$. This is done on the level of Lie algebras
as follows
$$[0,E_{nn},-E_{nn}]=[0,-E_{11}+E_{nn},E_{11}-E_{nn}]
+[0,E_{11},-E_{11}].$$
An elementary computation shows that the two terms commute with
$Y_1$ and $Y_2$, respectively. Hence (\ref{KAH}) follows and
thus $G/H$ is of polar type with a 2-dimensional $A$.

\begin{rmk} \label{maximal rank}
Note that in \ref{triple spaces} and \ref{GPspaces}
all polar decompositions
$G=KAH$ are with $\af$ a full Cartan subspace in~$\sf$.
In \ref{SO23} this is only the case when $n=2$.
\end{rmk}

\subsection{Relation to spherical spaces}

It seems that there is a close connection between spherical spaces and polar spaces. 
Here we provide an indicator why spherical might imply polar. 

\begin{lemma} \label{interior point}
Let $(P,H)$ be a spherical pair with
Langlands decomposition $P=MAN$ of $P$.
Then there exists $a\in A$ such that
\begin{equation}\label{Sard}
\kf^a+\af+\hf=\gf
\end{equation}
where $\kf^a:=\Ad(a^{-1})(\kf)$.
\end{lemma}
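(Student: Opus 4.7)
The plan is to take $a=\exp(tX_0)$ with $X_0$ in the interior of the Weyl chamber $\af^+$ attached to $P$ and $t$ sufficiently large: as $t\to\infty$, the subspace $\Ad(a_t^{-1})\kf$ will ``converge" (after a suitable rescaling) to $(\mf\cap\kf)+\oline\nf$, whose sum with $\af$ and $\hf$ already equals $\gf$, and I will then deduce the identity at large but finite $t$ by a continuity argument.

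First I will analyze the action of $\Ad(a_t^{-1})$ on $\kf$. Using the $\theta$-stable root space decomposition one has $\kf=(\mf\cap\kf)\oplus\{Y+\theta Y\mid Y\in\nf\}$. The summand $\mf\cap\kf$ is fixed pointwise by $\Ad(A)$, whereas for $Y\in\gf^\alpha$ with $\alpha\in\Sigma^+$,
\[
\Ad(a_t^{-1})(Y+\theta Y)=e^{-t\alpha(X_0)}Y+e^{t\alpha(X_0)}\theta Y.
\]
Since $\Ad(a_t^{-1})\kf$ is closed under scalar multiplication, rescaling by $e^{-t\alpha(X_0)}$ produces $v_\alpha(t):=e^{-2t\alpha(X_0)}Y+\theta Y\in\Ad(a_t^{-1})\kf$, with $v_\alpha(t)\to\theta Y\in\oline\nf$ as $t\to\infty$.

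Next I will identify the limiting span. The vectors $(\mf\cap\kf)\cup\{\theta Y\mid Y\in\nf\}$ together with $\af$ and $\hf$ span $(\mf\cap\kf)+\oline\nf+\af+\hf$. By condition (1) of Definition~\ref{SP}, $\mf\cap\sf\subset\hf$, and since $\mf=(\mf\cap\kf)\oplus(\mf\cap\sf)$ this gives $\mf\subset(\mf\cap\kf)+\hf$. Hence the span contains $\mf+\af+\oline\nf+\hf=\oline\pf+\hf$. Because $\theta$ preserves $\hf$, applying $\theta$ to condition (2) ($\pf+\hf=\gf$) yields $\oline\pf+\hf=\gf$, so the limiting span is all of $\gf$.

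Finally, I will pass from the limit to a statement for finite $t$. Pick from the spanning set above a subset of size $\dim\gf$ that is a basis of $\gf$, and consider the corresponding family of vectors at finite $t$ (with each $\theta Y$ replaced by $v_\alpha(t)$). These vectors lie in $\Ad(a_t^{-1})\kf+\af+\hf$ and converge to the chosen basis as $t\to\infty$; by continuity of the determinant they remain linearly independent for $t$ sufficiently large, so $\Ad(a_t^{-1})\kf+\af+\hf=\gf$ for such $t$. The delicate point will be the rescaling in the first step: $\Ad(a_t^{-1})$ itself has no limit in $\GL(\gf)$ as $t\to\infty$, but the subspace $\Ad(a_t^{-1})\kf$ does converge in the Grassmannian once one exploits the scalar invariance of linear subspaces.
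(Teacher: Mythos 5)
Your proof is correct and follows essentially the same route as the paper: send $a$ to infinity along a ray in the Weyl chamber, observe that the rescaled subspace $\Ad(a^{-1})\kf$ converges in the Grassmannian to $(\mf\cap\kf)+\oline\nf$, and use conditions (1) and (2) of Definition~\ref{SP} to conclude that the limiting sum with $\af+\hf$ is all of $\gf$. The paper states the Grassmannian limit a bit more tersely (and works with $\Ad(a)\kf$ so the limit is $\nf$, avoiding the application of $\theta$ to condition (2)), but the mechanism and the determinant/semicontinuity step you spell out are exactly what is implicit there.
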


Note that in view of Sard's theorem 
an equivalent formulation of the
conclusion is that
$KAH$ has an interior point.

\begin{proof}
Otherwise $L(a):= \Ad(a)\kf + \af +\hf$ is a proper subspace
of $\gf$ for all $a\in A$. For $t\mapsto a_t$ a ray tending to
infinity  in $A^+$ we note that
$\lim_{t\to \infty} L(a_t)= \mf\cap\kf + \af + \nf+\hf$ in
the Grassmann variety of all subspaces of $\gf$.
By (1) and (2) in Definition \ref{SP} we obtain
$\lim_{t\to \infty} L(a_t)=\gf$. In particular, for large enough $t$ we 
have $L(a_t)=\gf$, a contradiction.
\end{proof}

\section{Strongly spherical spaces}\label{stronglyspherical}

The estimate in Theorem \ref{upper bound}  for matrix coefficients on $Z=G/H$ 
yields bounds for the $m_{v, \eta}$ on subsets of the 
form $K\oline {A^+_P}.z_0\subset Z$  where the associated minimal parabolic 
$P\supset A$ satisfies $PH$ is open. In order to derive a global estimate on $Z$
we therefore need that $Z$ admits not only the polar decomposition
$Z=KA.z_0$ but also the stronger 
\begin{equation}\label{union} 
Z=\bigcup_{P\supset A,\, {P}H\ {\rm open}} 
K\oline{A^+_P}.z_0\,.
\end{equation}
Unfortunately (\ref{union}) is not true for every $A$ which admits $G=KAH$. 

\begin{ex} Let $Z$ be the triple space of $G_0=\Sl(2,\R)$ and $A=A_0\times A_0 \times B_0$,
with $A_0$ the diagonal matrices with positive diagonal entries, 
and with $B_0=\SO_e(1,1)$. We have already seen in Proposition 
\ref{KAH for triple}
that $G= KAH$. It is not hard to see that  
(\ref{union}) fails for this $A$.
Let $P_0$ and $Q_0$ be parabolics containing $A_0$ and
$B_0$, respectively, and let $\oline{P}_0$ be opposite to $P_0$. 
Then $P'=P_0 \times \oline{P}_0 \times Q_0$ is a 
parabolic subgroup which intersects $H$ trivially, hence $P'H$ is open,
whereas $P'':= P_0 \times P_0 \times Q_0$ intersects $H$ in 
positive dimension and $P''H$ is not open. However, 
to attain that $Z=\cup
K\oline{A^+_P}.z_0$ we need the union to encompass parabolics of
both types $P'$ and $P''$.
\end{ex}

To remedy the situation we propose a new notion
which combines polar and spherical types compatibly.

\begin{definition}\label{strongsph}
A reductive homogeneous space $Z=G/H$ is of
{\rm strong spherical type} if the following
holds. There exists a maximal abelian subspace $\af\subset\sf$ 
and minimal parabolic subgroups
$P_1,\dots,P_l\supset A=\exp\af$ such that $P_jH$ is open
for all $1\leq j\leq l$ and such that 
\begin{equation}\label{union j}
Z=\bigcup_{j=1}^l K\oline{A^+_{P_j}}.z_0\,.
\end{equation}
\end{definition}

In particular, $Z$ is then both spherical and polar. The following
is now an immediate consequence of Theorem \ref{upper bound}.

Let $V$ be a Harish-Chandra module and let 
let $\Lambda_{j,V}\in\af^*$ be defined by 
(\ref{defi Lambda}) with respect to $P_j$,
that is, if $P_j=s_jPs_j^{-1}$ for $s_j$ in the
normalizer of $\af$ in $K$, then 
$\Lambda_{j,V}=\Lambda_V\circ\Ad(s_j^{-1})$.

\begin{cor}\label{corsst}
Assume that $Z=G/H$ is of strong spherical type. 
Then for each $v\in V$ and 
each $\anumber\in\N$
there exists a constant $C>0$ such that
\begin{equation}\label{global bound}
|m_{v,\eta}(ka.z_0)|\leq C\|\eta\|_ {-s} \,a^{\Lambda_{j,V}}
(1+\|\log a\|)^{d_V},
\end{equation}
for all $k\in K$, $a\in \oline{A^+_{P_j}}$ 
and
$\eta\in (V^{-\infty})^H\cap E_ {-\anumber}$. 
\end{cor}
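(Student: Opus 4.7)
The corollary is essentially immediate from Theorem \ref{upper bound}, since the definition of strong spherical type has been engineered precisely to reduce a global estimate on $Z$ to a chamber-by-chamber one. My plan has two steps.

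First, I would fix an index $j \in \{1, \ldots, l\}$ and apply Theorem \ref{upper bound} with $P_j$ in place of $P$. This is legitimate because $P_j \supset A$ is a minimal parabolic for which $P_j H$ is open, which are exactly the hypotheses of that theorem. The output is, for each $v \in V$ and each $s \in \N$, a constant $C_j > 0$ such that
\begin{equation*}
|m_{v,\eta}(ka)| \leq C_j \|\eta\|_{-s}\, a^{\Lambda_{j,V}}(1+\|\log a\|)^{d_V}
\end{equation*}
for all $k \in K$, $a \in \oline{A^+_{P_j}}$, and $\eta \in (V^{-\infty})^H \cap E_{-s}$. The exponent $\Lambda_{j,V}$ is the quantity produced by (\ref{defi Lambda}) relative to $P_j$, which by the paragraph preceding the corollary equals $\Lambda_V \circ \Ad(s_j^{-1})$ for $P_j = s_j P s_j^{-1}$ with $s_j \in N_K(\af)$. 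The integer $d_V$ is independent of $j$, since conjugation by $s_j \in N_K(\af)$ merely permutes the simple roots $\alpha_1,\ldots,\alpha_r$ and hence the summands $d_{F_j}$ whose sum defines $d_V$.

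Second, I would set $C := \max_{1\leq j \leq l} C_j$ and conclude via the strong-sphericity decomposition (\ref{union j}): every point of $Z$ is of the form $ka.z_0$ with $k \in K$ and $a \in \oline{A^+_{P_j}}$ for some $j$, so the desired bound is valid on all of $Z$. There is no substantive obstacle; the real work is already in Theorem \ref{upper bound}, and the only items to monitor are that the Sobolev order $s$ can be held fixed as $j$ varies, and that $\Lambda_V$ and $d_V$ transport correctly under the Weyl element $s_j$ relating $P$ to $P_j$, both of which are automatic.
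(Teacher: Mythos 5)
Your proposal is correct and coincides with the paper's (unstated but clearly intended) argument: the paper simply declares the corollary an immediate consequence of Theorem \ref{upper bound}, and you have spelled out exactly the chamber-by-chamber application of that theorem together with taking the maximum of the finitely many constants $C_j$. Your observations that $\Lambda_{j,V}$ transports by $\Ad(s_j^{-1})$ and that $d_V$ is unaffected by the Weyl-group permutation of simple roots are the right bookkeeping checks.
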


\subsection{Examples}
\subsubsection{Symmetric spaces are strongly spherical} \label{SSS}
As in Remark~\ref{rmk-spherical} let $\af_q\subset\sf\cap\qf$ 
and $\af\subset\sf$ be maximal abelian subspaces with $\af_q\subset\af$,
then we have seen in Example \ref{SSP} that $$Z=KA_q.z_0.$$
Furthermore, let $\af^+_{qj}$ for $j=1,\dots,l$ be the Weyl chambers of $\af_q$
corresponding to all (up to $K\cap H$-conjugacy)
the positive systems $\Sigma^+_{qj}$ for the roots of 
$\af_q$ in $\gf$. For each $j$ we choose a compatible positive system
$\Sigma^+_j$ for the roots of $\af$ in $\gf$, and denote by
$P_j$ the corresponding minimal parabolic subgroup of $G$.
Then $P_j$ is contained in the minimal $\sigma\theta$-stable 
parabolic subgroup corresponding to $\Sigma^+_{qj}$, and 
it follows from
Example \ref{ex:spherical} that $P_jH$ is open.
Finally 
\begin{equation}\label{union of chambers}
A_q=\bigcup_{j=1}^l \oline{A^+_{qj}},
\end{equation}
and since
$\oline{A^+_{qj}}\subset \oline{A^+_{P_j}}$, we obtain (\ref{union j}).

\subsubsection{Gross-Prasad spaces}
The simplest of these spaces is 
$$G/H=G_0\times H_0/\diag(H_0)$$
where $G_0=\GL(2,\R)$ and $H_0=\GL(1,\R)$ (see Sections \ref{STGP}, \ref{GPspaces}).
This space is strongly spherical with $A$ chosen as in the proof of 
Lemma \ref{GP is polar}.
We expect other Gross-Prasad spaces are strongly spherical.

\subsubsection{Triple space}
The triple space attached to $G_0=\Sl(2,\R)$  is strongly spherical. In \cite{DKS} 
we show that for any choice of $\af=\af_1 \oplus \af_2 \oplus \af_3$ with $\af_i \subset \sf_0$ 
one-dimensional subspaces and not all equal, one has $G=KAH$. Furthermore,
if all the one-dimensional subspaces are different from each other, then $PH$ is open for
all parabolics $P$ containing $A$. Thus $Z$ is strongly spherical.

\subsection{The wave front lemma}

The following result was proved for symmetric spaces in \cite{EM}, Thm.~3.1,
under the name of "wavefront lemma". It plays a crucial role in that paper.

Let $Z=G/H$ be of strong spherical type, and let 
$P_1,\dots,P_l\supset A$ be as in Definition \ref{strongsph}
so that $G=\cup_{j=1}^l K\oline{A^+_{P_j}}H$.

\begin{lemma}\label{wfl}
For every  neighborhood $V$ of $\1$ in $G$, there exists 
a neighborhood $U$ of $\1$ such that
$$ V g.z_0 \supset gU.z_0$$
for all $g\in \cup_{j=1}^l K\oline{A^+_{P_j}}$.
\end{lemma}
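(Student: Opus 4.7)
The plan is to reduce the assertion to a uniform statement on each chamber $\oline{A^+_{P_j}}$ separately, and then invoke sphericality to produce a complement of $\hf$ in $\gf$ on which $\Ad(a)$ acts non-expansively as $a$ varies in that chamber. First I would absorb the $K$-factor using compactness: by continuity of conjugation and compactness of $K$, there exists a neighborhood $V_1\subset V$ of $\1$ with $kV_1k^{-1}\subset V$ for all $k\in K$. The inclusion $gU.z_0\subset Vg.z_0$ is equivalent to $U\subset g^{-1}VgH$, so writing $g=ka$ with $k\in K$ and $a\in\oline{A^+_{P_j}}$ it suffices to produce a single neighborhood $U$ of $\1$ satisfying $aU.z_0\subset V_1 a.z_0$ for every $a\in\oline{A^+_{P_j}}$ and every $j\in\{1,\dots,l\}$.

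Fix $j$. The openness of $P_jH$ gives $\pf_j+\hf=\gf$, and since $H$ is $\theta$-stable, applying $\theta$ also yields $\oline\pf_j+\hf=\gf$, where $\oline\pf_j=\mf+\af+\oline\nf_j$ is the Lie algebra of the opposite minimal parabolic. I would then pick a linear complement $\qf_j\subset\oline\pf_j$ of $\hf$ in $\gf$. By the inverse function theorem applied to $(\xi,\eta)\mapsto\exp(\xi)\exp(\eta)$, any sufficiently small neighborhood $U_j$ of $\1$ is contained in $\exp(\mathcal{Q}_j)\,H$ for a prescribed small $\mathcal{Q}_j\subset\qf_j$. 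Since the $H$-factor acts trivially at $z_0$, this gives
\[
aU_j.z_0 \subset \{\exp(\Ad(a)\xi)\cdot a.z_0 \mid \xi\in \mathcal{Q}_j\}.
\]

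The decisive point is that $\Ad(a)$ acts non-expansively on $\oline\pf_j$ for $a\in\oline{A^+_{P_j}}$: on $\mf+\af$ it is the identity, and on each root space $\gf^\alpha\subset\oline\nf_j$ (with $\alpha$ a negative root for $P_j$) by the scalar $a^\alpha\le 1$. Fixing a norm on $\gf$ with respect to which the root space decomposition is orthogonal, this yields $\|\Ad(a)\xi\|\le\|\xi\|$ uniformly in $a\in\oline{A^+_{P_j}}$ for all $\xi\in\qf_j$. Shrinking $\mathcal{Q}_j$ so that $\exp(\mathcal{Q}_j)\subset V_1$, I then have $\exp(\Ad(a)\mathcal{Q}_j)\subset V_1$ for every $a$ in the chamber, whence $aU_j.z_0\subset V_1 a.z_0$. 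Setting $U:=\bigcap_{j=1}^l U_j$ completes the construction.

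The crucial subtlety, and the step where sphericality enters essentially, is that the complement $\qf_j$ must be chosen \emph{inside the opposite parabolic} $\oline\pf_j$ rather than arbitrarily in $\gf$. A generic complement would typically contain components along $\nf_j$, on which $\Ad(a)$ \emph{expands} as $a\to\infty$ in the chamber, and this would destroy the sought-for uniformity. It is precisely the identity $\oline\pf_j+\hf=\gf$, obtained by combining the spherical hypothesis $\pf_j+\hf=\gf$ with the $\theta$-stability of $H$, that permits the necessary choice of $\qf_j$.
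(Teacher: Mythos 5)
Your argument is correct and is essentially the paper's own proof: the paper likewise reduces to $a\in\oline{A^+_{P_j}}$ via $\Ad(K)$-invariance of $V$, uses that $\bar P_jH$ is open, and takes a transversal inside $\bar P_j$ on which conjugation by $\oline{A^+_{P_j}}$ does not expand, then intersects over $j$; you merely carry this out on the Lie-algebra level with a complement $\qf_j\subset\oline\pf_j$ of $\hf$. The only cosmetic point is that $\Ad(a)\mathcal{Q}_j$ need not lie in $\mathcal{Q}_j$, so one should take $\mathcal{Q}_j$ to be a norm-ball in $\qf_j$ and require $\exp$ of the corresponding ball in $\oline\pf_j$ to lie in $V_1$, which your non-expansiveness estimate then makes immediate.
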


\begin{proof} We may assume that $V$ is $\Ad(K)$-invariant.
By  (\ref{union j}) we reduce to the 
case $g=a\in \oline{A^+_{P_j}}$ for a
minimal parabolic $P_j\supset A$ such that 
$P_jH$ (and hence also $\bar P_j H$) is open.
Let $U_j$ be a neighborhood of ${\bf 1}$ in $\bar P_j$ which 
is contained in $V$ and which is
stable under conjugation 
with elements from $\oline{A^+_{P_j}}$.
As $\bar P_jH$ is open, we see that $U_j.z_0$ is a
neighborhood of $z_0$. Then 
$$Va.z_0\supset U_ja.z_0=aU_j.z_0\supset aU.z_0$$
where $U=\cap_{j=1}^l U_jH$.
\end{proof}

\subsection{Weights on $Z$}
In this subsection we let $Z=G/H$ be a reductive homogeneous space.
In the context of strongly spherical spaces we aim for a more quantitative 
bound in Theorem \ref{upper bound}, in which the constant $C$ 
depends continuously on $v$ in 
the $V^\infty$-topology. For that the concept of weight will be useful.

We fix a norm $\|\cdot\|$ on $G$ (see \cite{W}, Section 2.A.2).  
By a {\it weight} on $Z=G/H$ we shall understand  a locally bounded  
function  $w: Z \to \R_{>0}$ such that there exists constants $C>0$, $N\in \N$ with  
$$ w(gz)\leq C \|g\|^N w(z) \qquad (g\in G, z\in Z)\, .$$
The following is an easy way to construct a weight on $Z$.

\begin{lemma}\label{inf weight}
Let $w(g.z_0):=\inf_{h\in H} \|gh\|$ for $g\in G$. Then 
$w$ is a weight on $Z$. Furthermore, there exist constants
$c_1,c_2,C_1,C_2>0$ such that 
\begin{equation}\label{Mostow norm}
C_1e^{c_1\|X\|}\leq w(k\exp(X).z_0)\leq C_2e^{c_2\|X\|}
\end{equation}
for all $k\in K$ and $X\in\sf\cap\qf$.
\end{lemma}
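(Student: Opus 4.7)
The verification that $w$ is a well-defined weight is formal. Independence of the coset representative is immediate: $\inf_{h\in H}\|gh_0h\|=\inf_{h\in H}\|gh\|$ for any $h_0\in H$. Local boundedness follows from $w(g.z_0)\le\|g\|$ (take $h=\1$). The weight inequality $w(g_1g_2.z_0)\le\|g_1\|w(g_2.z_0)$ comes from submultiplicativity $\|g_1g'\|\le\|g_1\|\|g'\|$ and passing to the infimum over $h$. Choosing the norm $\|\cdot\|$ on $G$ to be $K$-bi-invariant and $\theta$-invariant (possible via a standard averaging trick on a faithful representation with unitary $K$-action) we additionally have $w(kz)=w(z)$ for $k\in K$, and, using $\theta(H)=H$ together with $\theta(\exp X)=\exp(-X)$ for $X\in\sf$, the crucial symmetry $w(\exp(X).z_0)=w(\exp(-X).z_0)$. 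The upper bound in (\ref{Mostow norm}) is then immediate: $w(k\exp(X).z_0)=w(\exp(X).z_0)\le\|\exp(X)\|\le C_2 e^{c_2\|X\|}$, the last estimate being the standard Cartan-decomposition bound coming from $\|\pi(\exp X)\|_{\rm op}=\exp(\lambda_{\max}(d\pi(X)))\le e^{c\|X\|}$ in a faithful representation with unitary $K$-action.

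For the lower bound I invoke Lemma~\ref{rep w h-fixed} to obtain a finite-dimensional $(\pi,V)$ and $v_\hf\in V$ with $\hf=\{X\in\gf\mid d\pi(X)v_\hf=0\}$. After passing to a suitable tensor power and averaging the $H$-orbit of $v_\hf$ over the finite component group $H/H^0$, I may arrange $\pi(h)v_\hf=v_\hf$ for all $h\in H$. Equip $V$ with a $K$-invariant Hermitian form, making $\pi|_K$ unitary and $d\pi(\sf)$ self-adjoint. The $G$-norm axiom supplies $C,N$ with $\|\pi(g)\|_{\rm op}\le C\|g\|^N$, and combined with the $H$-invariance of $v_\hf$:
\[
\|\pi(g)v_\hf\|=\|\pi(gh)v_\hf\|\le C\|gh\|^N\|v_\hf\|\qquad(h\in H),
\]
so taking the infimum over $h$,
\[
\|\pi(g)v_\hf\|\le Cw(g.z_0)^N\|v_\hf\|.
\]
The lower bound thus reduces to showing $\|\pi(\exp(X))v_\hf\|\ge c\,e^{c'\|X\|}$ for $X\in\sf\cap\qf$.

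Since $d\pi(X)$ is self-adjoint, diagonalizing it and writing $v_\hf=\sum_\lambda(v_\hf)_\lambda$ in the eigenbasis yields
\[
\|\pi(\exp(X))v_\hf\|^2+\|\pi(\exp(-X))v_\hf\|^2=2\sum_\lambda\cosh(2\lambda)\|(v_\hf)_\lambda\|^2\ge \|(v_\hf)_\lambda\|^2 e^{2|\lambda|}
\]
for each eigenvalue $\lambda$. The symmetry $w(\exp(X).z_0)=w(\exp(-X).z_0)$ above means the symmetrized left-hand side controls $w(\exp(X).z_0)^{2N}$ from below up to a constant. Because $\hf\cap\qf=\{0\}$, every nonzero $X\in\sf\cap\qf$ satisfies $X\notin\hf$, so $d\pi(X)v_\hf\ne 0$ and at least one eigenvalue $\lambda^*(X)\ne 0$ of $d\pi(X)$ has nontrivial $v_\hf$-component for each $X\ne 0$. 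The main technical obstacle is then to upgrade this pointwise nonvanishing to a uniform lower bound $|\lambda^*(X)|\ge c\|X\|$ on the unit sphere of $\sf\cap\qf$; this is the quantitative analogue of what drives the proof of Lemma~\ref{AH closed}. It is handled by combining the weight decomposition of $v_\hf$ relative to a maximal abelian $\af\subset\sf$—whose active weights $\{\mu\in\af^*:v_\mu\ne 0\}$ have joint annihilator exactly $\af\cap\hf$ by the defining property of $v_\hf$, so that their restriction to $\af\cap\qf$ (which is $\kappa$-orthogonal to $\af\cap\hf$) spans $(\af\cap\qf)^*$ and gives $\max_\mu|\mu(X)|\ge c\|X\|$ for $X\in\af\cap\qf$—with a compactness argument across the $K$-conjugation $\sf=\Ad(K)\af$ to obtain uniformity.
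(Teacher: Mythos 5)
Your route is genuinely different from the paper's. The paper's lower bound is a short geometric argument: by the Cartan decomposition of $H$ and bi-$K$-invariance of the norm one reduces to $h=\exp(T)$ with $T\in\sf\cap\hf$; writing $\exp(X)\exp(T)\in K\exp(Y)$ with $Y\in\sf$, the non-positive sectional curvature of $K\backslash G$ together with $X\perp T$ gives the Pythagorean inequality $\|Y\|\geq\|X\|$, and hence $\|\exp(X)\exp(T)\|=\|\exp(Y)\|\geq C_1e^{c_1\|X\|}$. Your representation-theoretic route via Lemma~\ref{rep w h-fixed} can in principle deliver the same bound and has the merit of avoiding comparison geometry, but it is considerably more involved.

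The place where your argument actually breaks is the final ``compactness across $K$-conjugation'' step. If $X\in\sf\cap\qf$ is written as $\Ad(k)X_0$ with $X_0\in\af$, there is no reason for $X_0$ to lie in $\af\cap\qf$ -- $\Ad(K)$ preserves $\sf$ but not $\qf$ -- so the estimate you establish on $\af\cap\qf$ does not apply to $X_0$. Moreover, the spectral decomposition of $v_\hf$ relative to $d\pi(X)=\pi(k)\,d\pi(X_0)\,\pi(k)^{-1}$ lives in the $k$-translated weight spaces, so both the active eigenvalue set and the crucial prefactor $\|(v_\hf)_{\lambda^*}\|$ vary with $k$ and are not controlled by the computation at $k=e$; and since $\lambda^*(X)$ is not a continuous function of $X$ (eigenvalue multiplicities jump), a naive compactness argument with $\lambda^*$ on the unit sphere does not close the gap either. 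A clean repair that stays inside your framework is to drop the eigenvalue bookkeeping entirely: set $f(X,t):=\|\pi(\exp(tX))v_\hf\|^2+\|\pi(\exp(-tX))v_\hf\|^2$, a finite sum of $\cosh$'s, hence log-convex and even in $t$, and note $f(X,1)>f(X,0)$ for every $X$ on the unit sphere of $\sf\cap\qf$ because $\hf\cap\qf=\{0\}$ forces $d\pi(X)v_\hf\neq0$. Compactness of that sphere gives $\delta:=\min_X\bigl(\log f(X,1)-\log f(X,0)\bigr)>0$, and log-convexity of $t\mapsto \log f(X,t)$ propagates this to $f(X,t)\geq f(X,0)\,e^{\delta(t-1)}$ for all $t\geq1$, which is the uniform exponential bound you need. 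One smaller caveat: the ``tensor power plus averaging over $H/H^0$'' device is not obviously harmless, since averaging can enlarge the infinitesimal stabilizer of the resulting vector and destroy the exactness $\hf=\{X\in\gf:d\pi(X)v_\hf=0\}$ the argument requires; assuming $H$ connected is safer.
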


Note that (\ref{Mostow norm}) applies to 
every element in $Z$ by (\ref{polar}).

\begin{proof} 
We have $w\ge 1$ since $\|g\|\ge 1$ for all $g\in G$.
As $\|xy\|\le \|x\|\|y\|$ for $x,y\in G$
the first statement follows. 

There exist constants $c_1,c_2,C_1,C_2>0$ such that
$$C_1e^{c_1\|Y\|}\leq \|\exp(Y)\| \leq C_2e^{c_2\|Y\|}$$
for all $Y\in\sf$. Hence the
second inequality in (\ref{Mostow norm}) is clear.
For the first inequality we need to show that
$$C_1e^{c_1 \|X\|} \leq \|\exp(X)h\|$$
for all $h\in H$. By Cartan decomposition of $H$ we reduce 
to $h=\exp(T)$ where $T\in\sf\cap\hf$. Let $Y\in\sf$ be determined by
$\exp(X)\exp(T)\in K\exp(Y)$, then $\|X\|\leq \|Y\|$
since the sectional curvatures of $K\backslash G$ are $\leq 0$
(see \cite{Helgason}, p. 73) and $X\perp Y$.
Now 
$$C_1e^{c_1 \|X\|} \leq C_1e^{c_1\|Y\|}\leq \|\exp(Y)\|=\|\exp(X)\exp(T)\|$$
as claimed.
\end{proof}

\par Let $w$ be a weight on $Z$. From the definition we readily obtain that $w^{-1}$ is a weight. More generally 
$w_\alpha(z):= w(z)^\alpha$  defines a weight for all $\alpha \in \R$. 
Further if $w$ and $w'$ are weights then so is $w\cdot w'$. If $w(z)\geq c$ for 
some $c>1$ and all $z\in Z$, then $\log w$ is a weight as 
well.

\par A more refined construction of weights than that of Lemma
\ref{inf weight}
goes as follows. 
Let $U$ be a finite dimensional $G$-module with a non-zero $H$-fixed
vector $u_H\in U$ (see Lemma \ref{rep w h-fixed}).
Such a representation will be referred to as $H$-{\it spherical}.
Set 
\begin{equation}\label{defi w_U}
w_U(g.z_0):= \| g\cdot u_H\|\qquad (g\in G)\, ,
\end{equation}
then $w_U$ is a weight.

\begin{lemma} \label{H-spherical weights}
Let $U$ be $H$-spherical and irreducible.
Let $P=M_PA_PN_P$ be a parabolic subgroup of $G$ for which $PH$ is open. 
Let $\lambda\in\af_P^*$ be the highest $\af_P$-weight 
of $U$ and  $A^+_P\subset A_P$ the positive chamber, both
with respect to $P$.
Then there exist constants $C_1, C_2>0$ such that 
\begin{equation} \label{shw} C_1 a^{\lambda}\leq  w_U(ka.z_0) \leq C_2 a^{\lambda}  
\quad ( a\in \oline{A^+_P}, k\in K)\, .\end{equation}
\end{lemma}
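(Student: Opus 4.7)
The plan is to fix $k=\mathbf 1$ from the outset by using compactness of $K$: since $K$ acts by a compact family of operators on $U$, there exists $c>0$ with $c^{-1}\|v\|\le \|kv\|\le c\|v\|$ for all $k\in K$, $v\in U$, so it suffices to prove the two-sided bound for $w_U(a.z_0)=\|a\cdot u_H\|$ with $a\in \overline{A_P^+}$.

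Next I would decompose $U=\bigoplus_\mu U_\mu$ into $\af_P$-weight spaces and write $u_H=\sum_\mu u_\mu$ accordingly. By definition of highest weight with respect to $P$, every weight $\mu$ of $U$ satisfies $\lambda-\mu\in\sum \R_{\ge 0}\alpha$ where $\alpha$ ranges over the roots of $\af_P$ in $\nf_P$, hence $a^{\mu-\lambda}\le 1$ on $\overline{A_P^+}$. This immediately gives the upper bound
\[
\|a\cdot u_H\|\le \sum_\mu a^\mu\|u_\mu\|\le a^\lambda\sum_\mu\|u_\mu\|=C_2\,a^\lambda.
\]

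The substantive step is the lower bound, which I would reduce to showing that the highest-weight component $u_\lambda$ of $u_H$ does not vanish. The key observation is that $U':=\bigoplus_{\mu<\lambda}U_\mu$ is $\bar P$-stable: $\af_P$ and $\mf_P$ preserve weight spaces, while $\bar\nf_P$ strictly lowers weights, so $\bar\nf_P\cdot U'\subset U'$. Assume for contradiction that $u_H\in U'$. Since $u_H$ is $H$-fixed, we have $\bar P H\cdot u_H\subset U'$. By hypothesis $PH$ is open in $G$, and applying the Cartan involution shows $\bar P H$ is also open. The irreducibility of $U$ forces $G\cdot u_H$ to span $U$, so there are $g_1,\dots,g_n\in G$ with $\{g_i u_H\}$ spanning $U$; since spanning is an open condition on $n$-tuples of vectors and $\bar PH$ is open in $G$, I may take each $g_i\in \bar PH$, yielding $U\subset U'$, a contradiction. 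Hence $u_\lambda\ne 0$.

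Once $u_\lambda\ne 0$ is in hand, the lower bound follows by applying the projection $\pi_\lambda\colon U\to U_\lambda$ along $\bigoplus_{\mu\ne\lambda}U_\mu$, which is continuous and commutes with the $A_P$-action:
\[
\|a\cdot u_H\|\ge \|\pi_\lambda\|^{-1}\,\|\pi_\lambda(a\cdot u_H)\|=\|\pi_\lambda\|^{-1}\,a^\lambda\|u_\lambda\|=C_1\,a^\lambda.
\]
The only genuinely nontrivial point is the nonvanishing of $u_\lambda$; this is where the sphericity of the pair $(P,H)$ enters, via the openness of $\bar PH$ in $G$ combined with the irreducibility of $U$. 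Everything else is linear-algebraic manipulation of weight decompositions.
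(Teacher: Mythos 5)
Your proof takes essentially the same route as the paper's, with two cosmetic differences. The paper chooses a $K$-invariant Hermitian inner product on $U$ with $\langle Xu,v\rangle=-\langle u,\theta(X)v\rangle$, so that distinct $\af_P$-weight spaces are orthogonal and $\pi(a)$ is self-adjoint for $a\in A_P$; you instead reduce to $k=\mathbf 1$ by compactness of $K$ and use a weight-space projection $\pi_\lambda$. The upper bound is identical. Both lower-bound arguments reduce to showing the highest-weight component $u_\lambda$ of $u_H$ is nonzero: the paper uses that $U_\lambda$ is $P$-invariant (so if $u_H\perp U_\lambda$ then $\pi(g)u_H\perp U_\lambda$ for $g\in \bar PH$), you the dual fact that $U'=\bigoplus_{\mu\neq\lambda}U_\mu$ is $\bar P$-invariant. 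These are equivalent statements via the $K$-invariant form.

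There is, however, one step whose justification does not hold as stated: ``since spanning is an open condition on $n$-tuples of vectors and $\bar PH$ is open in $G$, I may take each $g_i\in\bar PH$.'' Openness of two subsets of $G^n$ (the spanning locus and $(\bar PH)^n$) does not force them to intersect; \emph{a priori} they could be disjoint. What you actually need is a density or analyticity fact: since $U$ is finite-dimensional, $g\mapsto\pi(g)u_H$ is a polynomial map, so $\{g\in G:\pi(g)u_H\in U'\}$ is a real-analytic subvariety of $G$; if it contains the nonempty open set $\bar PH$ then it contains every connected component of $G$ that $\bar PH$ meets, and then $\pi(g)u_H\in U'$ for all such $g$, which is what contradicts irreducibility (the paper's terse ``contradicting irreducibility'' is concealing this same argument). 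With ``openness'' replaced by ``analyticity'' (or equivalently, by the observation that the spanning locus is Zariski-dense), your argument is complete and matches the paper's.
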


\begin{proof} We may choose an inner product on $U$ such that 
$$\la X.u,v\ra =\la u, -\theta(X).v\ra$$ for $X\in\gf$.
In particular, the norm is then $K$-invariant. 
Let $U_\lambda\subset U$ be the $\lambda$-weight space for $\af_P$, then
$U=\U(\bar n_P)U_\lambda$ and hence all the
$\af_P$-weights $\mu$ in $U$ are obtained from $\lambda$ by
subtracting positive combinations of positive $\af_P$-roots. It follows that
$a^\mu\leq a^\lambda$ for all $a\in \oline{A_P^+}$.
By expanding $u_H$ into $\af_P$-weights
we conclude the second inequality of (\ref{shw}).

Note that 
$u_H$ cannot be orthogonal to $U_\lambda$. Otherwise, as
$U_\lambda$ is $P$-invariant, $\pi(g)u_H$ would be
orthogonal to $U_\lambda$
for all $g$ in the open set $\bar PH$, contradicting irreducibility.
Hence $\la u_H, u\ra \neq 0$ 
for some $u\in U_\lambda$. Now 
$$a^\lambda |\la u_H, u\ra|=|\la u_H, a\cdot u\ra|=|\la a\cdot u_H, u\ra|\leq \|u\| w_U(a).$$
for $a\in A_P$, and the first inequality of (\ref{shw}) follows.
\end{proof}

\subsection{Symmetric spaces}
In this section we assume that $Z=G/H$ is a symmetric space
and use the notation from Example \ref{SSS}. In particular 
\begin{equation}\label{union A_q}
A_q=\cup_{j=1}^l \oline{A_{qj}^+}.
\end{equation}
We fix a chamber $A_q^+$ (for example $A_ {q1}^+$) for
reference, and choose Weyl group elements $s_j$ such that
$A_{qj}^+=\Ad(s_j) A_q^+$ for $j=1,\dots,l$.

\begin{lemma}\label{symm H sph span} 
Assume that $G/H$ is a symmetric space.
For each $\Lambda\in\af_q^*$ and all $d\in\Z$
there exists a 
weight $w$ and a constant $C>0$ such that 
\begin{equation}\label{weight bounds}
a^{s_j\Lambda}(1+\|\log a\|)^d\leq w(ka.z_0) \leq C  a^{s_j\Lambda}(1+\|\log a\|)^d
\end{equation}
for all $k\in K$, $a\in \oline{A^+_{qj}}$ and $j=1,\dots,l$.
\end{lemma}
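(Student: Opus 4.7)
The plan is to build $w$ as a product of two factors: one capturing the exponential growth $a^{s_j\Lambda}$ and one capturing the polynomial correction $(1+\|\log a\|)^d$. Both will be assembled from weights of the form $w_U$ coming from Lemma \ref{H-spherical weights}. The essential external input is the Cartan-Helgason theorem for the symmetric space $Z=G/H$: the set of highest $\af_q$-weights (with respect to $\Sigma^+_{q1}$) of the irreducible finite-dimensional $H$-spherical representations of $G$ is a sub-semigroup of $\af_q^*$ whose real span is all of $\af_q^*$ and which contains strictly $\Sigma^+_{q1}$-dominant elements.

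For the exponential factor, I would choose irreducible $H$-spherical representations $U_1,\dots,U_r$ whose highest $\af_q$-weights $\lambda_1,\dots,\lambda_r$ form a basis of $\af_q^*$. Since $P_jH$ is open (see Example \ref{SSS}), Lemma \ref{H-spherical weights} applied to $P_j$ gives $w_{U_i}(ka.z_0)\asymp a^{s_j\lambda_i}$ on $\oline{A^+_{qj}}$; indeed the set of $\af_q$-weights of $U_i$ is Weyl-invariant, so the highest weight with respect to $\Sigma^+_{qj}$ is exactly the $\Sigma^+_{qj}$-dominant Weyl translate $s_j\lambda_i$ of $\lambda_i$. Writing $\Lambda=\sum_{i=1}^r c_i\lambda_i$ with $c_i\in\R$, I set
$$w_\Lambda:=\prod_{i=1}^r w_{U_i}^{c_i}.$$
Real powers of a weight are again weights (the reciprocal case uses that the Wallach norm satisfies $\|g^{-1}\|=\|g\|$), and products of weights are weights, so $w_\Lambda$ is a weight on $Z$ with $w_\Lambda(ka.z_0)\asymp a^{s_j\Lambda}$ on each chamber.

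For the logarithmic factor, I would fix an additional irreducible $H$-spherical $U_0$ with strictly $\Sigma^+_{q1}$-dominant highest $\af_q$-weight $\lambda_0$, and set
$$L(z):=1+\log(1+w_{U_0}(z)).$$
Using $1+ab\le(1+a)(1+b)$ together with the weight inequality for $w_{U_0}$ one obtains $L(gz)\le L(z)+C_1+N_0\log\|g\|$; since $L\ge 1$ pointwise and $\|g\|\ge 1$ for the Wallach norm, this converts to the multiplicative bound $L(gz)\le C\|g\|^{N}L(z)$, so $L$ is a weight. Strict dominance of $s_j\lambda_0$ on $\oline{\af^+_{qj}}$ gives $s_j\lambda_0(\log a)\asymp\|\log a\|$ uniformly on that chamber, so $L(ka.z_0)\asymp 1+\|\log a\|$. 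Because $L\ge 1$ the power $L^d$ is a weight for every $d\in\Z$, with $L^d(ka.z_0)\asymp(1+\|\log a\|)^d$. Setting $w:=c\cdot w_\Lambda\cdot L^d$ and adjusting $c>0$ so that the lower bound comes out with constant $1$ yields the asserted two-sided estimate.

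The one step that requires real care is verifying that $L$, built using a logarithm, belongs to the weight class; every other step is a routine combination of closure of the weight class under product and real power, plus the uniform-in-$j$ comparison between $s_j\lambda_0(\log a)$ and $\|\log a\|$ that rests on the strict dominance of $\lambda_0$.
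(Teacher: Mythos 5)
Your proof is correct and follows essentially the same strategy as the paper: use the Cartan--Helgason/Hoogenboom fact that highest $\af_q$-weights of $H$-spherical representations span $\af_q^*$, form the product $\prod_i w_{U_i}^{c_i}$ for the exponential factor, and take a logarithm of a suitable exponentially-growing weight for the factor $(1+\|\log a\|)^d$. The only cosmetic divergence is in the logarithmic step: the paper bootstraps via the already-proven $d=0$ case with an arbitrary $\lambda_0\in\af_q^*$ satisfying $\lambda_0(X)\geq\|X\|$ on $\af_q^+$ and then applies the closure property ``$\log(cw_0)$ is a weight when $w_0\geq 1$'', whereas you pick a strictly dominant $H$-spherical highest weight $\lambda_0$ directly and form $1+\log(1+w_{U_0})$ --- both are sound and amount to the same idea.
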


\begin{proof} It follows from the work of Hoogenboom 
(see \cite{BanII}, Section~5) that 
$\af_q^*$ is spanned by the restrictions of the highest weights of
$H$-spherical representations. 
Hence $\Lambda=c_1\lambda_1+\dots+c_k\lambda_k$
for some $c_1,\dots,c_k\in\R$, where $\lambda_1,\dots,\lambda_k\in \af_q^*$,
are highest weights with respect to $A_q^+$ of 
irreducible $H$-spherical representations $U_1,\dots,U_k$.
The highest weight of $U_i$ 
with respect to $A^+_ {qj}$ is then $s_j\lambda_i$.
It follows from Lemma \ref{H-spherical weights}
that
$$
C_1a^{s_j\lambda_i}\leq w_{U_i}(ka.z_0) \leq C_2a^{s_j\lambda_i}
$$
for $a\in A_{qj}^+$.
With $w$ a multiple of $\Pi_i w_{U_i}^{c_i}$ we obtain (\ref{weight bounds})
for $d=0$.

Select $\lambda_0\in\af_q^*$ such that
$\lambda_0(X)\ge \|X\|$ for all $X$ in the cone $\af_q^+$.
By applying the proved version of (\ref{weight bounds}) 
we see that
there exist a weight $w_0$ and a constant $C_0>0$ such that
$$
a^{s_j\lambda_0}\leq w_0(ka.z_0) \leq C_0 a^{s_j\lambda_0}
$$
for $a\in A_{qj}^+$ and all $j$, and hence
$$
e^{\|\log a\|}\leq w_0(ka.z_0) \leq C_0 e^{\|\lambda_0\|\|\log a\|}
$$
for all $a\in A$. In particular, $w_0\ge 1$, hence
$\log(cw_0)$ is a weight for every $c>1$.
Taking logarithms we thus find a weight $w_1$
and a constant $C'_0>0$
for which
$$
1+\|\log a\|\leq w_1(ka.z_0) \leq C_0'(1+\|\log a\|)
$$
for all $a\in A$. Now (\ref{weight bounds}) follows
by multiplication of the previously found weight with $w_1^d$.
\end{proof}

\par To any weight $w$ we associate the Banach space  
$$E_w:=\{f\in C(Z) \mid \|f\|_{w}:= \sup_{z\in Z} w(z) |f(z)|<\infty\}\, .$$
The group $G$ acts on $E_w$ by left displacements in the arguments, say $\pi(g)f (z):=f(g^{-1}z)$ and we have 
$\|\pi(g)\| \leq C \|g\|^N$. 
Thus the smooth vectors $E_w^\infty$  form  an $SF$-representation of $G$  in the sense of \cite{BK} (that is a
smooth Fr\'echet representation of moderate growth). 

In the following theorem the linear form $\Lambda_V$ is defined 
by (\ref{defi Lambda})
with respect to an open chamber of $A$, which is compatible
with the fixed chamber~$A^+_q$.

\begin{theorem}\label{Sstrong upper bound}
Suppose that $Z=G/H$ is symmetric. 
Let $V$ be a Harish-Chandra module
and fix $\eta\in (V^{-\infty})^H$. 
Then there exists a continuous norm $q$ on $V^\infty$ 
such that 
\begin{equation} \label{fundbound2} |m_{v,\eta}(a)| \leq q(v)  a^{\Lambda_V}
(1+\|\log a\|)^{d_V} \end{equation}
for all $a \in \oline{A_{q}^+}$, and $v\in V^\infty$.
\end{theorem}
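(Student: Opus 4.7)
The plan is to combine Corollary~\ref{corsst}, which already delivers the estimate for $K$-finite vectors $v\in V$ with a constant depending on $v$, with a suitable weight function on $Z$ and the Casselman--Wallach globalization theorem in order to upgrade the bound to a statement continuous in $v\in V^\infty$.

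First, since $Z$ is strongly spherical as a symmetric space (Example~\ref{SSS}), I apply Lemma~\ref{symm H sph span} with $\Lambda=\Lambda_V|_{\af_q}$ and $d=d_V$ to obtain a weight $w$ on $Z$ and a constant $C_0>0$ such that
\[ a^{s_j\Lambda_V}(1+\|\log a\|)^{d_V} \le w(ka.z_0) \le C_0\, a^{s_j\Lambda_V}(1+\|\log a\|)^{d_V} \]
for all $k\in K$, $a\in\oline{A^+_{qj}}$, and every $j=1,\dots,l$. Corollary~\ref{corsst} then yields, for each $v\in V$, a constant $C(v)>0$ such that $|m_{v,\eta}(z)|\le C(v)\,w(z)$ on all of $Z=\bigcup_j K\oline{A^+_{qj}}.z_0$.

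Next I promote this pointwise bound to a continuous one on $V^\infty$. The function $w^{-1}$ is again a weight, so $E_{w^{-1}}$ is a continuous Banach $G$-representation and its space of smooth vectors $E_{w^{-1}}^\infty$ is an SF-representation. The above estimate says that the $(\gf,K)$-equivariant linear map $\Phi\colon V\to E_{w^{-1}}$, $v\mapsto m_{v,\eta}$, is well-defined; its image consists of $K$-finite and $Z(\gf)$-finite vectors of $E_{w^{-1}}$, which are therefore smooth, so $\Phi(V)\subset E_{w^{-1}}^\infty$. The Casselman--Wallach globalization theorem extends $\Phi$ uniquely to a continuous $G$-equivariant map $\tilde\Phi\colon V^\infty\to E_{w^{-1}}^\infty$. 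For any $v\in V^\infty$ and any sequence $v_n\in V$ with $v_n\to v$ in $V^\infty$, one identifies $\tilde\Phi(v)=m_{v,\eta}$: on the one hand $\tilde\Phi(v_n)=m_{v_n,\eta}\to\tilde\Phi(v)$ in $E_{w^{-1}}$, hence pointwise on $Z$; on the other hand $m_{v_n,\eta}(gH)=\eta(\pi(g)^{-1}v_n)\to\eta(\pi(g)^{-1}v)=m_{v,\eta}(gH)$ by continuity of $\eta$ on $V^\infty$.

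Continuity of $\tilde\Phi$ now produces a continuous seminorm $q'$ on $V^\infty$ with $\|m_{v,\eta}\|_{w^{-1}}\le q'(v)$, that is, $|m_{v,\eta}(z)|\le q'(v)\,w(z)$ throughout $Z$. Specializing to $z=a.z_0$ with $a\in\oline{A_q^+}$ (the reference chamber, where $s_1=1$) and invoking the upper bound on $w$ produces \eqref{fundbound2} with $q(v)=C_0\,q'(v)$, which may be promoted to a continuous norm by adding any Sobolev norm. The essential technical point is the appeal to Casselman--Wallach: one must verify both that $\Phi(V)$ lands in the smooth vectors $E_{w^{-1}}^\infty$ (standard, from joint $K$- and $Z(\gf)$-finiteness inside a moderate-growth Banach representation) and that the extension produced by automatic continuity agrees with the matrix coefficient map on all of $V^\infty$, which is settled by the density argument above.
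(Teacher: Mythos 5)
Your proof follows the same route as the paper's: you invoke Lemma~\ref{symm H sph span} to build a weight matching $a^{\Lambda_{j,V}}(1+\|\log a\|)^{d_V}$ on each chamber, feed the $K$-finite bound of Corollary~\ref{corsst} into the weighted Banach space, and apply Casselman--Wallach to the $(\gf,K)$-module map $v\mapsto m_{v,\eta}$ to get continuity on $V^\infty$. The only difference is cosmetic (you use $E_{w^{-1}}$ where the paper builds the reciprocal weight directly and uses $E_w$), and you supply a bit more detail than the paper on why the extended map still produces the matrix coefficient for non-$K$-finite $v$ — a worthwhile point to make explicit, even though the paper treats it as understood.
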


Note that for this case it is known that
$\dim (V^{-\infty})^H< \infty$ (see Corollary 2.2 of \cite{Ban}).

\begin{proof} 
We use the parametrization
of the chambers of $A_q$ from (\ref{union A_q}).
According to (\ref{global bound}) we obtain for all $k\in K$,
$a \in \oline{A_{q,j}^+}$ and $j=1,\dots,l$ 
that 
\begin{equation} \label{up} |m_{v,\eta}(ka.z_0)| \leq C_v a^{s_j^{-1}\Lambda_V} 
( 1 + \|\log a\|)^{d_V}\, . \end{equation}
It follows from Lemma \ref{symm H sph span}
that 
there exists a weight $w$ on $Z$ such that
$$ w(ka.z_0) \asymp  a^{-s_{j}^{-1}\Lambda_V}  
(1 + \| \log a\|)^{-d_V} \qquad (k\in K, a \in \oline{A_{q,j}^+}),$$
for all $j$, and hence the product
$wm_{v,\eta}$ is bounded on $Z$ for all $v\in V$. Hence we obtain
an embedding 
$$V\hookrightarrow E_w^\infty, \ \ v\mapsto m_{v,\eta}\, .$$
By the Casselman-Wallach globalization theorem (see \cite{W}, Thm.~11.6.7
or \cite{BK}) 
this embedding extends to a continuous 
embedding of Fr\'echet spaces $V^\infty\hookrightarrow E_w^\infty$.  In particular, there exists 
a continuous norm $q$ on $V^\infty$  such that 
$$ \|m_{v, \eta}\|_{w} \leq q(v) \qquad (v\in V^\infty)\, .$$
Unwinding the definition of the norm in $E_w$ we retrieve
(\ref{up}) with $C_v$ replaced by $q(v)$. Now
(\ref{fundbound2}) follows.  
\end{proof}

\subsection{Group Case}\label{group case} 
We explicate Theorem \ref{Sstrong upper bound} for the 
group case $Z= G \times G / G$. For that let $W$ be a Harish-Chandra module 
for $(\gf, K)$ and $\tilde W$ its contragredient. With that we form the 
Harish-Chandra module $V:= W \otimes \tilde W$ for $(\gf \times \gf , K \times K)$. 
We view $V$ as a submodule of ${\rm End} (W)$ and identify $V^\infty$ as a subspace 
of ${\rm End}(W^\infty)$. To be more precise $V^\infty$ identifies with the 
rapidly decreasing matrices as follows: Choose a Hilbert globalization $E$ of $W$ and with respect to the 
Hilbert structure an orthonormal basis $v_1, v_2, \ldots $ of $E$ consisting of 
vectors $v_i$ which belong to $K$-types $\tau_i \in \hat K$ with $\tau_i \leq \tau_j $ for $i\leq j$.
This identifies $W^\infty$ with the standard nuclear Fr\'echet space 
$$s(\N):=\{ (x_n)_{n\in \N} \in \C^\N \mid \sup_{n\in \N} n^k |x_n|<\infty, \forall k\in \N \}$$
of rapidly decreasing sequences (see \cite{Fokko}, p.~290).
A continuous linear map $T: s(\N) \to s(\N)$ is thus given by a matrix $T=(t_{n,m})_{n,m}$  
and we say that $T$ is rapidly decreasing provided that 
$\|T\|_k:=\sup_{n,m} |t_{n,m}| (n+m)^k  <\infty$ for 
all $k\in \N$.  Now $V^\infty$ is the space of such maps, and its
topology is defined by the norms $\|\,.\,\|_k$
In particular the trace map
$$\eta: V^\infty \to \C, \ \ T \mapsto {\rm tr} (T)=\sum_n t_{nn} $$
is a continuous linear functional on $V^\infty$, which is fixed by the diagonal 
subgroup $H:=\diag(G)< G\times G$. 

\par Let $\af\subset\sf$ be maximal abelian, and put
$$\af_q=\{(X,-X)\in\gf\times\gf\mid X\in\af\}$$
then $\af_q$ is a subspace for $Z= G \times G / G$
as chosen in Remark~\ref{rmk-spherical}.
The element $\Lambda_V\in\af^*\times\af^*$ is identified as
$\Lambda_V=(\Lambda_W,-\Lambda_W)$, and likewise $d_V=2d_W$.
If we write $\pi$ for the action of $G$ on $W^\infty$, then it follows that
the bound in   
Theorem \ref{Sstrong upper bound} asserts for all $a\in \oline{A^+}$ and $T\in V^\infty$
that 
$$ |{\rm tr} (\pi (a^{-1}) T) |\leq  a^{\Lambda_W} ( 1 + \|\log a\|)^{2d_W} q(T)$$
with $q$ a continuous norm on $V^\infty$. 
Let us specialize to the case 
where $T$ is a rank one operator $T(w):= \tilde u(w) u$ for $u, w\in W^\infty$ 
and $\tilde u \in \tilde W^\infty$. 
We conclude:

\begin{cor}\label{group case cor}
Let $W$ be a Harish-Chandra module for $G$.
Then there exist $d\in\N$ and
continuous norms $p$ on $W^\infty$ 
and $\tilde p$ on $\tilde W^\infty$
such that 
$$ |\tilde u (\pi (a^{-1}) u)| \leq  a^{\Lambda_W} ( 1 + \|\log a\|)^{d} p(u) \tilde p(\tilde u)$$
for all $u\in W^\infty$, $\tilde u \in \tilde W^\infty$ and $ a\in \oline {A^+}$. 
\end{cor}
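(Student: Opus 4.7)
The plan is to specialize Theorem \ref{Sstrong upper bound} exactly along the lines already sketched in Section \ref{group case}: apply it with $V=W\otimes\tilde W$, $H=\diag(G)$, $\eta=\mathrm{tr}$, and then take $T\in V^\infty$ to be the rank-one operator $T_{u,\tilde u}(w):=\tilde u(w)\,u$. The corollary will follow once one checks that rank-one operators give smooth vectors, and once the continuous seminorm $q$ arising from Theorem \ref{Sstrong upper bound} is shown to factor as a product of seminorms in $u$ and $\tilde u$.

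First I would check that $T_{u,\tilde u}$ lies in $V^\infty$ and that the bilinear map
$$B\colon W^\infty\times\tilde W^\infty\to V^\infty,\qquad (u,\tilde u)\mapsto T_{u,\tilde u},$$
is jointly continuous. In the matrix model of Section \ref{group case}, if one writes $u=\sum u_n v_n$ and $\tilde u=\sum\tilde u_m\tilde v_m$ in the dual bases, the matrix entries of $T_{u,\tilde u}$ are $t_{nm}=u_n\tilde u_m$, a product of two rapidly decreasing sequences, and a direct estimate of the seminorms $\|\cdot\|_k$ yields $\|T_{u,\tilde u}\|_k\leq C_k\,p_k(u)\,\tilde p_k(\tilde u)$ for continuous seminorms $p_k,\tilde p_k$ on $W^\infty,\tilde W^\infty$. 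This simultaneously proves membership and continuity.

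Next, a one-line trace computation shows
$$\mathrm{tr}(\pi(a^{-1})T_{u,\tilde u})=\tilde u(\pi(a^{-1})u),$$
so substituting into the version of Theorem \ref{Sstrong upper bound} already recorded in the text gives
$$|\tilde u(\pi(a^{-1})u)|\leq a^{\Lambda_W}(1+\|\log a\|)^{2d_W}\,q\bigl(T_{u,\tilde u}\bigr)$$
for some continuous seminorm $q$ on $V^\infty$ depending only on $\eta=\mathrm{tr}$.

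Finally, by joint continuity of $B$ at the origin together with the bihomogeneity $q(T_{\lambda u,\mu\tilde u})=|\lambda\mu|\,q(T_{u,\tilde u})$, a standard rescaling argument produces continuous seminorms $p$ on $W^\infty$ and $\tilde p$ on $\tilde W^\infty$ such that $q(T_{u,\tilde u})\leq p(u)\,\tilde p(\tilde u)$. Setting $d=2d_W$ concludes the proof. The only substantive step is this last factorization, which is the standard fact that a continuous seminorm composed with a jointly continuous bilinear map between Fréchet spaces is dominated by a product of continuous seminorms on the factors; everything else is bookkeeping in the tensor-product identification of $V^\infty$.
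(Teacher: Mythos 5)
Your proposal is correct and follows essentially the same route as the paper: specialize Theorem \ref{Sstrong upper bound} to $V=W\otimes\tilde W$ with $\eta=\mathrm{tr}$ on $Z=G\times G/\diag(G)$, and evaluate on the rank-one operator $T_{u,\tilde u}$, using the identification of $V^\infty$ with rapidly decreasing matrices. The only difference is that you make explicit the factorization $q(T_{u,\tilde u})\leq p(u)\,\tilde p(\tilde u)$ (via the estimate $\|T_{u,\tilde u}\|_k\leq C_k\,p_k(u)\,\tilde p_k(\tilde u)$ and the standard seminorm argument for jointly continuous bilinear maps), a step the paper leaves implicit in the matrix model.
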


\begin{rmk}\label{group case rmk}
The corollary generalizes the estimate in \cite{W}, Thm.~4.3.5,
where the matrix coefficient is required to be $K$-finite
on one side. However, it should be emphasized that our proof 
of Theorem \ref{Sstrong upper bound} invokes the globalization theorem, 
which is not available at that stage in the exposition of \cite{W}.
\end{rmk}

\subsection{Other strongly spherical spaces}
We now  return to the general assumption that
$Z=G/H$ is a reductive homogeneous space with $H$ connected, and
discuss the generalization of Theorem \ref{Sstrong upper bound}.
We assume that $Z$ is strongly spherical, so that
(\ref{union j}) is valid.
Recall that $\af\subset\sf$ is maximal abelian. We use the standard
isomorphism between $\af$ and its dual space $\af^*$, and let 
$\af_{hw}\subset\af$ be the subspace such that $\af_{hw}^*$ is the span
of all the $H$-spherical highest weights $\lambda\in\af^*$.

\begin{theorem}\label{strong upper bound}
Let $V$ be a Harish-Chandra module and let $\eta\in (V^{-\infty})^H$. 
Let $P_{1},...,P_{\ell}$ be minimal parabolic subgroups that contains $A=exp(\af)$ with $P_{j}H$ open for each $1 \leq j \leq \ell$ and such that 
$Z=\bigcup_{j=1}^{\ell} K \oline{A^+_{P_j}}.z_0.$
\begin{enumerate}
\item
Then for any  $v\in V^\infty$ there exists a constant $C_{v}$ such that  
\begin{equation*} \label{fundbound3w} |m_{v,\eta}(a)| \leq C_{v} 
a^{\Lambda_{j,V}}
(1+\|\log a\|)^{d_V}\, \end{equation*} 
for all $a \in \oline{A^+_{P_j}}.$

\item 
Suppose that 

\begin{equation}\label{enough_fd_spherical}
Z=\cup_{j=1}^l K(\oline{A^+_{P_j}}\cap A_{hw}).z_0\,.
\end{equation}

Then there exists a continuous norm $q$ on $V^\infty$ 
such that 
\begin{equation*} \label{fundbound3} |m_{v,\eta}(a)| \leq q(v) 
a^{\Lambda_{j,V}}
(1+\|\log a\|)^{d_V}\, \end{equation*} 
for all $a \in \oline{A^+_{P_j}}\cap A_{hw}$, $v\in V^\infty$.

\end{enumerate}

\end{theorem}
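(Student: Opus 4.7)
The argument follows the template of the proof of Theorem~\ref{Sstrong upper bound}, adapted to the general strongly spherical setting.

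For part (1), my plan is to apply Theorem~\ref{upper bound} separately with each minimal parabolic $P_j$ in place of $P$. Writing $P_j=s_jPs_j^{-1}$ for $s_j$ in the normalizer of $\af$ in $K$, the exponent $\Lambda_V$ is transported to $\Lambda_{j,V}=\Lambda_V\circ\Ad(s_j^{-1})$, so Theorem~\ref{upper bound} delivers the bound $|m_{v,\eta}(ka)|\le C\|\eta\|_{-s}\,a^{\Lambda_{j,V}}(1+\|\log a\|)^{d_V}$ for $v\in V$ and $a\in\oline{A^+_{P_j}}$. The strong sphericity decomposition $Z=\bigcup_j K\oline{A^+_{P_j}}.z_0$ then produces the global bound on $Z$ for every $v\in V$. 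To extend to $v\in V^\infty$, I inspect the proof of Theorem~\ref{upper bound}: its downward induction yields the constant $C$ as a continuous Sobolev-type seminorm in $v$ (the initial bound~(\ref{1000}) is already of this form and each inductive step preserves it after differentiating $v$), so the estimate extends by density from $V$ to $V^\infty$.

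For part (2), the central task is to construct a weight $w$ on $Z$ with asymptotics
$$w(ka.z_0)\asymp a^{-\Lambda_{j,V}}(1+\|\log a\|)^{-d_V}
\qquad(k\in K,\ a\in\oline{A^+_{P_j}}\cap A_{hw}),$$
uniformly for $j=1,\dots,\ell$. By the definition of $\af_{hw}^*$, I write $\Lambda_V|_{\af_{hw}}=\sum_i c_i\lambda_i|_{\af_{hw}}$, where each $\lambda_i$ is the $P$-highest weight of an irreducible finite-dimensional $H$-spherical representation $U_i$. By Lemma~\ref{H-spherical weights} applied to each $P_j$, we have $w_{U_i}(ka.z_0)\asymp a^{s_j\lambda_i}$ on $\oline{A^+_{P_j}}$, since $s_j\lambda_i$ is the $P_j$-highest weight of $U_i$. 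Hence the product $\prod_i w_{U_i}^{c_i}$ realizes the desired exponential factor on every chamber at once. The logarithmic correction $(1+\|\log a\|)^{-d_V}$ is then supplied by multiplication with $w_1^{-d_V}$, where $w_1$ is constructed verbatim as in Lemma~\ref{symm H sph span} from the logarithm of a weight growing exponentially in $\|\log a\|$.

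Once $w$ is in hand, the final step parallels the proof of Theorem~\ref{Sstrong upper bound}: condition~(\ref{enough_fd_spherical}) guarantees $Z=\bigcup_j K(\oline{A^+_{P_j}}\cap A_{hw}).z_0$, so part (1) combined with the asymptotics of $w$ shows that $w\,m_{v,\eta}$ is bounded on $Z$ for every $v\in V$. This yields a $(\gf,K)$-equivariant embedding $V\hookrightarrow E_w^\infty$; the Casselman--Wallach globalization theorem extends it to a continuous linear map $V^\infty\to E_w^\infty$, and pulling back the Banach norm $\|\cdot\|_w$ produces the required continuous norm $q$ on $V^\infty$. The main obstacle I anticipate is arranging the weight to realize the correct exponent $\Lambda_{j,V}$ on all chambers simultaneously; this hinges on the observation that conjugation by $s_j$ permutes the finite-dimensional irreducible $H$-spherical representations and sends the $P$-highest weight $\lambda_i$ to the $P_j$-highest weight $s_j\lambda_i$, so that a single product of the $w_{U_i}$ serves uniformly across all~$j$. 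The restriction to $A_{hw}$ in the statement is unavoidable, since outside this subspace the $H$-spherical highest weights do not span enough of $\af^*$ to reconstruct $\Lambda_V$.
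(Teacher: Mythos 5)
The genuine gap is in your treatment of part (1). Up to the density step, your argument is exactly the paper's: part (1) is, in the paper, nothing more than Corollary~\ref{corsst} (Theorem~\ref{upper bound} applied to each $P_j$, with $\Lambda_{j,V}=\Lambda_V\circ\Ad(s_j^{-1})$, glued by the decomposition $Z=\bigcup_j K\oline{A^+_{P_j}}.z_0$), and that covers $K$-finite $v\in V$ with a constant depending on $v$. Your additional claim — that inspection of the proof of Theorem~\ref{upper bound} shows the constant to be a continuous Sobolev seminorm of $v$, so the bound extends to $V^\infty$ by density — is not substantiated and is very unlikely to be an ``inspection''. The downward induction is not merely ``differentiate $v$'': at each stage one passes to the coset $\oline v\in V/\nf_F V$, chooses a basis of the finite-dimensional space $\U(\af_F)\oline v$ (finiteness here uses that $V$ is a Harish-Chandra module and $v\in V$), a matrix $B$ depending on $v$, lifts $v_k$ and elements $w_k\in\nf_F V$, and invokes Lemma~\ref{diff eq lemma}, whose constant depends on $B$; none of these data, nor the norms of the chosen lifts, are controlled by a fixed Sobolev norm $\|v\|_k$. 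Indeed, if such a uniform bound $C\|\eta\|_{-s}\|v\|_k$ were available in Theorem~\ref{upper bound}, then the conclusion of part (2) — and of Theorem~\ref{Sstrong upper bound}, and the improvement over Wallach stressed in Remark~\ref{group case rmk} — would follow immediately on all of $\oline{A^+_{P_j}}$, without hypothesis (\ref{enough_fd_spherical}) and without the Casselman--Wallach theorem, which is precisely what the paper says is not available. So either read part (1) as a statement about $v\in V$ (which is all the paper's own one-line proof via Corollary~\ref{corsst} actually delivers), or a genuinely new argument is needed for smooth $v$; your density argument as written is a gap. Note that your part (2) is unaffected, since there you only use the bound for $v\in V$ before globalizing.

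For part (2) your route coincides with the paper's: construct, from irreducible $H$-spherical representations whose highest weights span $\af_{hw}^*$, a weight with $w(ka.z_0)\asymp a^{-\Lambda_{j,V}}(1+\|\log a\|)^{-d_V}$ on the relevant sets (Lemma~\ref{H-spherical weights} on each chamber, plus the logarithmic weight as in Lemma~\ref{symm H sph span}), use (\ref{enough_fd_spherical}) to see that $w\,m_{v,\eta}$ is bounded on $Z$ for $v\in V$, embed into $E_w^\infty$ and apply Casselman--Wallach to produce $q$. One caution, which the paper's own one-sentence proof also glosses over: you only know $\sum_i c_i\lambda_i=\Lambda_V$ on $\af_{hw}$, whereas on $\oline{A^+_{P_j}}$ the weight behaves like $a^{\sum_i c_i s_j\lambda_i}$; to get the stated exponent on $\oline{A^+_{P_j}}\cap A_{hw}$ one needs $\Ad(s_j^{-1})$ to preserve $\af_{hw}$ (equivalently, the spanning set of spherical highest weights should be taken in a Weyl-compatible way), a point hidden in your closing ``observation''. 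In the symmetric model case this is automatic because $\af_{hw}=\af_q$ is stable under the relevant Weyl elements; in the general statement it deserves a sentence.
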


\begin{proof}
The first point in the theorem is a direct consequence of Corollary \ref{corsst}. The second point follows from the first and the assumption (\ref{enough_fd_spherical}) using the arguments given in theorem \ref{Sstrong upper bound}. 
\end{proof}




\begin{rmk} The triple space for $\Sl(2,\R)$ satisfies 
the assumptions with $\af_{hw}=\af=\af_1\oplus\af_2\oplus\af_3$. 
If we denote by $\delta$ the defining 
representation of $\Sl(2,\R)$, then $\delta\times\delta\times\1$,
$\delta\times\1\times\delta$ and $\1\times\delta\times\delta$
are $H$-spherical representations, and their highest weights span $\af$.
\end{rmk}

\end{document}